\numberwithin{equation}{section}
\numberwithin{figure}{section}
\theoremstyle{plain}
\newtheorem{thm}{\protect\theoremname}[section]
  \theoremstyle{definition}
  \newtheorem{example}[thm]{\protect\examplename}
  \theoremstyle{plain}
  \newtheorem{lem}[thm]{\protect\lemmaname}
  \theoremstyle{definition}
  \newtheorem{defn}[thm]{\protect\definitionname}
  \theoremstyle{definition}
  \newtheorem{remark}[thm]{\protect\remarkname}
  \theoremstyle{plain}
  \newtheorem{prop}[thm]{\protect\propositionname}
   \theoremstyle{plain}
   \newtheorem{conj}[thm]{\protect\conjecturename}
   \theoremstyle{plain}
  \newtheorem{cor}[thm]{\protect\corollaryname}
  \theoremstyle{plain}
  \newtheorem*{thm*}{\protect\theoremname}
  \theoremstyle{plain}
  \newtheorem{claim}[thm]{\protect\claimname}
   \theoremstyle{plain}
  \newtheorem*{prop*}{\protect\propositionname}
  \providecommand{\claimname}{Claim}
  \providecommand{\remarkname}{Remark}
  \providecommand{\definitionname}{Definition}
  \providecommand{\examplename}{Example}
  \providecommand{\lemmaname}{Lemma}
  \providecommand{\propositionname}{Proposition}
  \providecommand{\corollaryname}{Corollary}
  \providecommand{\theoremname}{Theorem}
\providecommand{\theoremname}{Theorem}
\providecommand{\conjecturename}{Conjecture}
\begin{document}
\global\long\def\Sym{\mathrm{Sym}}
\global\long\def\Id{\mathrm{Id}}
\global\long\def\Per{\mathrm{Per}}
\global\long\def\f{\mathcal{F}}
\global\long\def\l{\mathcal{L}}
\global\long\def\pn{\mathcal{P}\left(\left[n\right]\right)}
\global\long\def\g{\mathcal{G}}
\global\long\def\s{\mathcal{S}}
\global\long\def\m{\mathcal{M}}
\global\long\def\mh{\mu_{\frac{1}{2}}}
\global\long\def\mp{\mu_{p}}
\global\long\def\j{\mathcal{J}}
\global\long\def\d{\mathcal{D}}
\global\long\def\Inf{\mathrm{Inf}}
\global\long\def\p{\mathcal{P}}
\global\long\def\mpo{\mu_{p_{0}}}
\global\long\def\fpp{f_{p}^{p_{0}}}
\global\long\def\llll{\l_{\mu}}
\global\long\def\h{\mathcal{H}}
\global\long\def\n{\mathbb{N}}
\global\long\def\a{\mathcal{A}}
\global\long\def\b{\mathcal{B}}
\global\long\def\c{\mathcal{C}}
\global\long\def\sf{f_{2}}
\global\long\def\bin{\mathrm{Bin}}
\global\long\def\C{C_{2}}

\begin{frontmatter}[classification=text]


\author[david]{David Ellis}
\author[nathan]{Nathan Keller\thanks{Supported by the Israel Science Foundation (grant no.
402/13), the Binational US-Israel Science Foundation (grant no. 2014290), and by the Alon Fellowship.}}
\author[noam]{Noam Lifshitz}

\begin{abstract}
The edge isoperimetric inequality in the discrete cube specifies, for each pair of integers
$m$ and $n$, the minimum size $g_n(m)$ of the edge boundary of an $m$-element subset of $\left\{ 0,1\right\}^{n}$; the extremal families (up to automorphisms of the discrete cube) are initial segments of the lexicographic ordering on $\{0,1\}^n$. We show that for any $m$-element subset $\f \subset \{0,1\}^n$ and any integer $l$, if the edge boundary of $\f$ has size at most $g_n(m)+l$, then there exists an extremal family $\g \subset \{0,1\}^n$ such that $|\f \Delta \g| \leq Cl$, where $C$ is an absolute constant. This is best possible, up to the value of $C$. Our result can be seen as a `stability' version of the edge isoperimetric inequality in the discrete cube, and as a discrete analogue of the seminal stability result of Fusco, Maggi and Pratelli \cite{fmp08} for the isoperimetric inequality in Euclidean space.
\end{abstract}
\end{frontmatter}


\section{Introduction}
Isoperimetric inequalities are of ancient interest in mathematics. In general, an isoperimetric inequality gives a lower bound on the `boundary size'
of a set of a given `size', where the exact meaning of these words
varies according to the problem. One of the best-known examples is the isoperimetric inequality for Euclidean space, which states (informally) that among all subsets of $\mathbb{R}^n$ of given volume (whose surface area is defined), a Euclidean ball has the smallest surface area. An exact formulation (actually, one of the versions) of this inequality is as follows.
\begin{thm}\label{thm:Iso-Euclidean}
If $A \subset \mathbb{R}^n$ is a Borel set with Lebesgue measure $\mu(A) < \infty$, then
$$\Per(A) \geq \Per(B),$$
where $B$ is a Euclidean ball in $\mathbb{R}^n$ with $\mu(B)=\mu(A)$.
\end{thm}
Here, $\Per(S)$ denotes the {\em distributional perimeter} of a set $S \subset \mathbb{R}^n$, which is equal to the $(n-1)$-dimensional Lebesgue measure of the topological boundary of $S$, for sufficiently `nice' sets $S$. (E.g., it suffices for $S$ to be a Borel set with finite Lebesgue measure and piecewise smooth topological boundary.)
\medskip

When an isoperimetric inequality is sharp, and the extremal sets are known, it is natural to ask whether the inequality is also `stable' --- i.e., if a set has boundary of size `close' to the minimum, must that set be `close in structure' to an extremal set?

In their seminal work \cite{fmp08}, Fusco, Maggi and Pratelli obtained a stability result for Theorem~\ref{thm:Iso-Euclidean}, confirming a conjecture of Hall~\cite{Hall92}.
\begin{thm}[Fusco, Maggi, Pratelli, 2008]
\label{thm:fmp}
Let $\epsilon >0$. Suppose $A \subset \mathbb{R}^n$ is a Borel set with Lebesgue measure $\mu(A) < \infty$, and with
$$\Per(A) \leq (1+\epsilon)\Per(B),$$
where $B$ is a Euclidean ball with $\mu(B)=\mu(A)$. Then there exists $x \in \mathbb{R}^n$ such that
$$\mu(A \Delta (B + x)) \leq C_n\, \sqrt{\epsilon}\, \mu(A),$$
where $C_n >0$ is a constant depending upon $n$ alone.
\end{thm}

As observed in \cite{fmp08}, Theorem \ref{thm:fmp} is sharp up to the value of the constant $C_n$, as can be seen by taking $S$ to be an ellipsoid with $n-1$ semi-axes of length 1 and one semi-axis of length slightly larger than 1.

\bigskip

\noindent \textbf{Discrete isoperimetric inequalities}

\medskip

In the last fifty years, there has been a great deal of interest in {\em discrete} isoperimetric inequalities. These deal with the boundaries of sets of vertices in graphs. There are two natural measures of the boundary of a set of vertices in a graph: the {\em edge boundary} and the {\em vertex boundary}. If $G = (V,E)$ is a graph, and $A \subset V$ is a set of vertices of $G$, the {\em edge boundary} of $A$ consists of the set of edges of $G$ which join a vertex in $A$ to a vertex in $V \setminus A$; it is denoted by $\partial_{G}(A)$, or by $\partial A$ when the graph $G$ is understood. The {\em vertex boundary} of $A$ consists of the set of vertices of $V \setminus A$ which are adjacent to a vertex in $A$; it is sometimes denoted by $b_{G}(A)$, or by $b(A)$ when the graph $G$ is understood. If $G = (V,E)$ is a graph, the {\em edge isoperimetric problem for $G$} asks for a determination of $\Phi_G(m):=\min\{|\partial A|:\ A \subset V,\ |A|=m\}$, for each integer $m$; similarly, the {\em vertex isoperimetric problem for $G$} asks for a determination of $\Psi_G(m):=\min\{|b(A)|:\ A \subset V,\ |A|=m\}$, for each integer $m$.

An important example of a discrete isoperimetric problem, and the focus of this paper, is the edge-isoperimetric problem for the $n$-dimensional discrete cube, $Q_n$. (We define $Q_n$ to be the graph with vertex-set $\{0,1\}^n$, where two 0-1 vectors are adjacent if they differ in exactly one coordinate.) This isoperimetric problem has numerous applications, both to other problems in mathematics, and in other areas such as communication complexity (see e.g. \cite{Harper}), network science (see \cite{bezrukov}) and game theory (see \cite{Hart}). Hereafter, if $A \subset \{0,1\}^n$, we write $\partial A$ for the edge boundary of $A$ with respect to $Q_n$.

The edge isoperimetric problem for $Q_n$ was solved by Harper \cite{Harper}, Lindsey \cite{Lindsey}, Bernstein \cite{Bernstein}, and Hart \cite{Hart}. Let us describe the solution. We may identify $\{0,1\}^n$ with the power-set $\pn$ of $[n]: = \{1,2,\ldots,n\}$, by identifying a 0-1 vector $(x_1,\ldots,x_n)$ with the set $\{i \in [n]:\ x_i=1\}$. We can then view $Q_n$ as the graph with vertex set $\pn$, where two sets $S, T \subset [n]$ are adjacent if $|S \Delta T|=1$. The {\em lexicographic ordering} on $\pn$ is defined by $S > T$ iff $\min(S \Delta T) \in S$. If $m \in [2^n]$, the {\em initial segment of the lexicographic ordering on $\pn$ of size $m$} is simply the subset of $\pn$ consisting of the $m$ largest elements of $\pn$ with respect to the lexicographic ordering. If $\l \subset \pn$ is an initial segment of the lexicographic ordering, we say $\l$ is {\em lexicographically ordered}. Note that if $m = 2^d$ for some $d \in \mathbb{N}$, then the initial segment of the lexicographic ordering on $\pn$ of size $m$ is the $d$-dimensional subcube $\{S \subset [n]:\ [n-d] \subset S\}$.

Harper, Bernstein, Lindsey and Hart proved the following.
\begin{thm}[The edge isoperimetric inequality for $Q_n$]
\label{thm:edge-iso}
If $\f \subset \p([n])$, then $|\partial \f| \geq |\partial \l|$, where $\l \subset \pn$ is the initial segment of the lexicographic ordering of size $|\f|$.
\end{thm}

Let us describe the extremal families for Theorem \ref{thm:edge-iso}. If $\mathcal{F},\mathcal{G} \subset \p([n])$, we say that $\f$ and $\g$ are {\em weakly isomorphic} if there exists an automorphism $\phi$ of $Q_n$ such that $\g = \phi(\f)$; in this case, we write $\f \cong \g$. Equivalently, $\f,\g \subset \{0,1\}^n$ are weakly isomorphic iff $\g$ can be obtained from $\f$ by permuting the coordinates $1,2,\ldots,n$ and interchanging $0$'s with $1$'s on some subset of the coordinates.
Clearly, weak isomorphism preserves the size of the edge boundary. It is well-known (and easy to check by analyzing known proofs of Theorem \ref{thm:edge-iso}) that equality holds in Theorem \ref{thm:edge-iso} if and only if $\f$ is weakly isomorphic to $\l$. In particular, if $|\f|$ is a power of 2, then equality holds in Theorem \ref{thm:edge-iso} if and only if $\f$ is a subcube.


\medskip

To date, several stability versions of Theorem~\ref{thm:edge-iso} have been obtained. Using a Fourier-analytic argument, Friedgut, Kalai and Naor~\cite{FKN} obtained a stability result for sets of size $2^{n-1}$, showing that if $\f \subset \pn$ with $|\f| = 2^{n-1}$ and $|\partial \f| \leq (1+\epsilon)2^{n-1}$, then $|\f \Delta \mathcal{C}|/2^n= O(\epsilon)$ for some codimension-1 subcube $\mathcal{C}$. (The dependence upon $\epsilon$ here is almost sharp, viz., sharp up to a factor of $\Theta(\log(1/\epsilon))$. Bollob\'as, Leader and Riordan (unpublished) proved an analogous result for $|\f| \in \{2^{n-2},2^{n-3}\}$, also using a Fourier-analytic argument. Samorodnitsky \cite{Samorodnitsky09} used a result of Keevash~\cite{Keevash08} on the structure of $r$-uniform hypergraphs with small shadows, to prove a stability result for all $\f \subset \pn$ with $\log_2|\f| \in \mathbb{N}$, under the rather strong condition $|\partial \f| \leq (1+O(1/n^4))|\partial \l|$. In \cite{Ellis}, the first author proved the following stability result (which implies the above results), using a recursive approach and an inequality of Talagrand \cite{Talagrand} (which is proved via Fourier analysis).

\begin{thm}[\cite{Ellis}]
\label{thm:e}
There exists an absolute constant $c>0$ such that the following holds. Let $0 \leq \delta < c$. If $\f \subset \pn$ with $|\f| = 2^{d}$ for some \(d \in \mathbb{N}\), and $|\f \Delta \mathcal{C}| \geq \delta 2^d$ for all $d$-dimensional subcubes $\mathcal{C} \subset \pn$, then
$$|\partial \f| \geq |\partial \mathcal{C}| +2^d \delta \log_{2}(1/\delta).$$
\end{thm}
As observed in \cite{Ellis}, this result is best possible (except for the condition $0 \leq \delta < c$, which was conjectured to be unnecessary in \cite{Ellis}). However, the problem of obtaining a sharp stability result for sets not of size a power of 2, remained open.

\bigskip

\noindent \textbf{Our result}

\medskip

In this paper, we obtain the following stability result for the edge isoperimetric inequality in the discrete cube, which applies to families of arbitrary size and which is sharp up to an absolute constant factor.
\begin{thm}
\label{thm:main}
There exists an absolute constant $C>0$ such that
the following holds. If $\f\subset \pn$ and $\l \subset \p([n])$ is the initial segment of the lexicographic ordering with $|\l|=|\f|$, then there exists a family $\g \subset \p([n])$ weakly isomorphic to $\l$, such that
$$|\f \Delta \g| \leq C(|\partial \f| - |\partial \l|).$$
\end{thm}
This is sharp up to the value of the absolute constant $C$. In fact, we conjecture that Theorem \ref{thm:main} holds with $C=2$, due to the following example.
\begin{example}
Let $s,t,n$ be integers with $t \geq 2$ and $t+2 \leq s \leq n$, and let
$$\f = \f_{n,s,t} = \{S \subset [n]:\ [t] \subset S\} \cup \{S \subset [n]:\ [t-2] \cup \{t+1,t+2,\ldots,s\} \subset S\}.$$

It is easy to see that
\begin{equation}
\label{eq:tightness-eg}
\min\{|\f \Delta \g|:\ \g \cong \l,\ \l \textrm{ is an initial segment of lex,}\ |\l| = |\f|\} =2(|\partial \f| - |\partial \l|),
\end{equation}
for each of the above families $\f$. We proceed to verify this when $s=n$ and $t=2$, i.e.\ for the family
$$\f_{n,n,2} = \{S \subset [n]: \{1,2\} \subset S\} \cup \{\{1,3,4,\ldots,n\},\{2,3,4,\ldots,n\},\{3,4,\ldots,n\}\};$$
the proof in the general case is similar. Let $\l \subset \pn$ be the initial segment of lex with $|\l| = |\f_{n,n,2}|$; then
$$\l = \{S \subset [n]: \{1,2\} \subset S\} \cup \{\{1,3,4,\ldots,n\},\{1,3,4,\ldots,n-1\},\{1,3,4,\ldots,n-2,n\}\}.$$
It is easy to check that $|\partial (\f_{n,n,2})| - |\partial \l|=2$. Now let $\g \subset \pn$ be any family weakly isomorphic to $\l$. Since $\l$ is contained in a codimension-1 subcube of $\pn$, so is $\g$. However, it is clear that $|\f_{n,n,2} \setminus \c| \geq 2$ for any codimension-1 subcube $\c$ of $\pn$. Hence, using the fact that $|\f_{n,n,2}| = |\g|$, we have
$$|\f_{n,n,2} \Delta \g| \geq 2|\f_{n,n,2} \setminus \g| \geq 2\min_{\c}|\f_{n,n,2} \setminus \c| \geq 4 = 2(|\partial (\f_{n,n,2})| - |\partial \l|),$$
where the minimum is over all codimension-1 subcubes $\c$ of $\pn$. Clearly, we have $|\f_{n,n,2} \Delta \l| = 4$. The last two facts imply (\ref{eq:tightness-eg}).
\end{example}

We note that the relation between $|\partial \f| - |\partial \l|$ and $|\f \Delta \g|$ in Theorem~\ref{thm:e} (which applies in the special case where $|\f|$ is a power of 2) is sharper than in Theorem~\ref{thm:main}, but the above example demonstrates that Theorem~\ref{thm:main} is sharp (up to an absolute constant factor) in its more general setting.

Instead of the Fourier-analytic techniques used in most previous works on isoperimetric stability, our techniques are purely combinatorial. As is often the case with theorems concerning $Q_n$, we prove Theorem \ref{thm:main} by induction on $n$, but the techniques we use in the inductive step are somewhat novel. The inductive step relies on an `intermediate' structure theorem (Proposition \ref{prop:implies Theorem main}) concerning the intersections of $\f$ with codimension-1 and codimension-2 subcubes, where $\f \subset \p([n])$ is a family with small edge boundary. This proposition is proved using some intricate combinatorial arguments, including shifting operators (a.k.a. `compressions'), and a detailed analysis of the {\em influences} of the family (see below).

\bigskip

\noindent \textbf{Related work}

\medskip

The edge boundary $\partial \f$ of a subset $\f \subset \pn $ is closely connected with the {\em influences} of $\f$.
For $\f \subset \pn$, the $i$th influence of $\f$ is defined by
$$\Inf_i[\f] := |\{A \subset [n]: |\f \cap \{A, A\Delta \{i\}\}|=1\}|/2^n,$$
and the total influence of $\f$ is $I[\f] := \sum_{i=1}^n \Inf_i[\f]$. Note that $I[\f] = |\partial \f|/2^{n-1}$ --- the total influence of a set is none other than the size of its edge boundary, appropriately normalised.

It is natural to rephrase this definition in the language of Boolean functions. If $f:\{0,1\}^n \to \{0,1\}$, we define the $i$th influence of $f$ to be the probability that, if $x \in \{0,1\}^n$ is chosen uniformly at random and the $i$th entry of $x$ is flipped, the value of the function $f$ changes. There is of course a natural one-to-one correspondence between subsets of $\pn$ and Boolean functions on $\{0,1\}^n$: for each $\f \subset \pn$, we associate to $\f$ the Boolean function $\chi_\f:\{0,1\}^n \to \{0,1\}$ defined by $\chi_{\f}(x) = 1$ iff $\{i \in [n]:\ x_i =1\} \in \f$; the $i$th influence of $\f$ is then precisely the $i$th influence of $\chi_{\f}$.

Over the last thirty years, many results have been obtained on the influences of Boolean functions (and functions on more general product spaces), and have proved extremely useful in such diverse fields as theoretical computer science, social choice theory and statistical physics, as well as in combinatorics (see, e.g., the survey~\cite{Kalai-Safra}). One of the most useful such results (and one of the first major results on influences) is the seminal `KKL theorem' (Kahn, Kalai and Linial \cite{KKL}), which states that for any Boolean function $f:\{0,1\}^n \to \{0,1\}$ with $\mathbb{E}[f]=\mu$, there exists $i \in [n]$ such that
$$\Inf_i[f] \geq c_0 \mu(1-\mu) \frac{\log n}{n},$$
where $c_0$ is an absolute constant --- so a Boolean function of expectation $1/2$ has some coordinate of `fairly large' influence, viz., $\Omega((\log n)/n)$. (Note that if $\mathbb{E}[f]=1/2$, then Theorem \ref{thm:edge-iso} implies that $I[f] \geq 1$, so a naive averaging argument only supplies a coordinate of influence at least $1/n$.) The proof of Kahn, Kalai and Linial made crucial use of Fourier analysis on the discrete cube, together with the hypercontractive inequality due (independently) to Gross, Bonami and Beckner. Another very useful example is Friedgut's `Junta' theorem \cite{Friedgut98}:
\begin{thm}[Friedgut's Junta theorem]
There exists an absolute constant $C$ such that the following holds. Let $\epsilon >0$, and let $\f \subset \pn$. Then there exists $\g \subset \pn$ depending upon at most $2^{CI[\f]/\epsilon}$ coordinates, such that $|\f \Delta \g| \leq \epsilon 2^n$.
\end{thm}
Here, to be completely formal, if $\g \subset \pn$ we say that $\g$ {\em depends upon $k$ coordinates} if there exists $S \subset [n]$ with $|S|=k$, such that $(T \in \g) \Leftrightarrow (T \cap S \in \g)$ holds for all $T \subset [n]$. Friedgut's theorem implies that any $\f \subset \pn$ with bounded total influence (at most $K$, say), and with measure $|\f|/2^n$ bounded away from 0 and 1, can be closely approximated by a `junta' --- that is, by a family which depends upon a bounded number of coordinates (depending on $K$). Friedgut's proof in \cite{Friedgut98} uses Fourier analysis and hypercontractivity, in a similar way to the proof of the KKL theorem. (We remark that in \cite{fs}, Falik and Samorodnitsky gave different, more combinatorial proofs of the KKL theorem and of Friedgut's theorem, utilising martingales rather than Fourier analysis and hypercontractivity.)

For families $\f \subset \pn$ with measure $|\f|/2^n \in [\alpha,1-\alpha]$, Theorem~\ref{thm:edge-iso} implies that $I[\f] \geq 2\alpha \log_2(1/\alpha)$. Hence, Friedgut's theorem can be viewed as a structure theorem for families of measure bounded away from 0 and 1, whose total influence lies within a constant multiplicative factor of the minimum possible total influence. Similarly, Friedgut \cite{Friedgut-SAT}, Bourgain \cite{Bourgain99} and Hatami \cite{Hatami12} obtained structure theorems for `large' subsets of $\pn$ whose `biased' measure lies within a constant multiplicative factor of the minimum possible, and Kahn and Kalai \cite{KK06} stated several conjectures on `small' subsets of $\pn$ satisfying the same condition. The results of \cite{Bourgain99,Friedgut-SAT,Hatami12} are deep, with many important applications.

In contrast to the results of \cite{Bourgain99,Friedgut-SAT,Hatami12}, which describe the structure of families with total influence within a constant factor of the minimum, our Theorem \ref{thm:main} describes the structure of Boolean functions with total influence `very' close to the minimum. On the other hand, the structure we obtain is very strong --- namely, closeness to a genuinely extremal family.

\section{Outline of the Proof and Organization of the Paper}
\label{sec:outline}

The main step of our proof is showing an `intermediate' structural result (Proposition~\ref{prop:implies Theorem main} below) for families of small edge boundary (i.e., small total influence). Informally, this says that if $\f \subset \pn$ such that $|\f| \leq 2^{n-1}$ and $I\left[\f\right]\leq I\left[\l\right]+\epsilon$, where $\l$ the initial segment of the lexicographic ordering on $\pn$ of size $|\f|$ and $\epsilon$ is sufficiently small, then one of the following must hold.
\begin{enumerate}
\item[Case (1):] $\f$ is essentially contained in a subcube of codimension 1 (i.e., in a family depending upon just one coordinate), and the total influence of the part of $\f$ inside the subcube is `small'.
\item[Case (2):] $\f$ is essentially contained in a subcube of codimension 2 (i.e., in a family depending upon just two coordinates), and the total influence of the part of $\f$ inside that subcube is `small'.
\end{enumerate}
Once Proposition~\ref{prop:implies Theorem main} is established, the main theorem follows by a short induction on $n$ (Proposition \ref{prop:implies Theorem main} is needed for the inductive step). It is perhaps fortunate that, for the inductive step, it suffices to pass to subcubes of codimension at most 2. Interestingly, it does not suffice to pass to subcubes of codimension 1, as we explain in Section \ref{sec:Components-of-the}; this might, at first glance, deceive one into abandoning an inductive approach.

The proof of Proposition~\ref{prop:implies Theorem main} is divided into two parts. In the first part, we prove that if $|\f|$ is `sufficiently large' (specifically, if $|\f| \geq 2^{n-2}(1+c)$ for an absolute constant $c>0$), then $\f$ must satisfy~(1); this is the content of Proposition \ref{prop:mu large r large}. In the second part, we prove that if $|\f| < 2^{n-2}(1+c)$, then $\f$ must satisfy (2); this is the content of Proposition \ref{prop:mu small}. The harder part is the first one; the proof is (again) by induction on $n$, but with six ingredients that are outlined at the beginning of Section~\ref{sec:large}. Roughly speaking, we define a collection of `small alterations' which preserve the property of being a counterexample to Proposition \ref{prop:mu large r large}; applying a sequence of these small alterations, we reduce to the case where the family is sufficiently `well-behaved' for us to successfully apply the inductive hypothesis. (Note that a very similar technique was used in \cite{Keller Lifshitz}.) The second part uses the classical \emph{shifting} technique \cite{Daykin,EKR}: we first reduce to the case where $\f$ is monotone increasing; we then choose the coordinate of largest influence ($i$ say), and apply appropriate shifting operators to $\f$ to produce a family contained entirely within the codimension-1 subcube $\{S \subset [n]:\ i \in S\}$; passing to this subcube, we obtain a family of twice the measure of the original family; we then repeat this process until the family is large enough that we can apply Proposition \ref{prop:mu large r large} (from the first part).

An important component of the proof of Proposition \ref{prop:implies Theorem main} is a pair of `bootstrapping' lemmas showing that if $\f$ is `somewhat' close to being contained in a subcube of codimension 1 or 2, then it must be `very' close to that subcube. In order to prove these bootstrapping lemmas, we introduce the notion of \emph{fractional lexicographic families}, as a convenient technical tool.
These allow us to analyse how the measure (or `mass') of a family of small total influence can be distributed between two disjoint codimension-1 subcubes (or between four disjoint codimension-2 subcubes); informally, this distribution cannot differ too much from in the extremal, lexicographically ordered family $\l$.


\bigskip

\noindent \textbf{Organization of the paper}

\medskip

In Section~\ref{sec:preliminaries}, we introduce some definitions and notation, and present some basic facts on influences and shifting. In Section~\ref{sec:Components-of-the}, we reduce the main theorem to the intermediate structural result, Proposition \ref{prop:implies Theorem main}, discussed above. Fractional lexicographic families and their properties are studied in Section~\ref{sec:frac-lex}, and the bootstrapping lemmas are presented in Section~\ref{sec:Bootstrapping}.

The proof of Proposition \ref{prop:implies Theorem main} spans Sections \ref{sec:large}-\ref{sec:culmination}. The case of `large' families is covered in Section \ref{sec:large}, `small' families are dealt with in Section~\ref{sec:small}, and finally we combine these two cases to prove Proposition \ref{prop:implies Theorem main} in Section \ref{sec:culmination}. We conclude with some open problems in Section \ref{sec:conc}.

\section{Preliminaries}
\label{sec:preliminaries}
\subsection{Notation}

We equip $\pn$ with the uniform measure, denoted by $\mu$:
$$\mu(\mathcal{F}) = \frac{|\mathcal{F}|}{2^n} \quad \forall \f \subset \pn.$$
We write $S\sim\pn$ to mean that $S$ is chosen uniformly at randomly from $\pn$.

If $C\subset B \subset [n]$, and $\f \subset \pn$, we define the `sliced' family
$$\f_{B}^{C} := \{S \setminus C:\ S \in \f,\ S \cap B = C\} \subset \mathcal{P}([n] \setminus B).$$
Note that we view $\f_{B}^{C}$ as a subfamily of $\p\left(\left[n\right]\backslash B\right)$,
and so $\mu\left(\f_{C}^{B}\right)=\left|\f_{C}^{B}\right|/2^{n-|B|}$.

If $\mathcal{F},\mathcal{G} \subset \p([n])$, we say that $\f$ and $\g$ are {\em weakly isomorphic} if there exists an automorphism $\phi$ of $Q_n$ such that $\g = \phi(\f)$; in this case, we write $\f \cong \g$. To be completely formal and explicit, if $\pi \in \textrm{Sym}([n])$ and $S \subset \left[n\right]$, we write $\pi\left(S\right):=\left\{ \pi\left(i\right)\,:\, i\in S\right\}$,
and if $\f \subset \pn$, we write $\pi\left(\f\right):=\left\{ \pi\left(S\right)\,:\, S\in\f\right\}$. Families $\f,\g \subset \pn$ are said to be \emph{isomorphic} if there exists $\pi \in \Sym([n])$ such that $\g=\pi\left(\f\right)$. If $D \subset [n]$ and $\f \subset \p([n])$, we define $X_D(\f) = \{S \Delta D:\ S \in \f\}$. Families $\f,\g \subset \pn$ are weakly isomorphic iff there exist $\pi \in \Sym([n])$ and $D \subset [n]$ such that $\g=X_D(\pi\left(\f\right))$.

For $n \in \mathbb{N}$ and $2^n \mu \in \{0,1,\ldots,2^n\}$, we let $\l_{\mu,n}$ denote the initial segment of the lexicographic ordering on $\pn$ with measure $\mu$. We write $\mathbb{L}_{\mu,n}$ for the class of all families weakly isomorphic
to $\l_{\mu,n}$. When $n$ is understood, we will write these as $\l_{\mu}$ and $\mathbb{L}_{\mu}$, suppressing the subscript $n$. If $\f \subset \pn$, we write
$$\mu\left(\f\Delta\mathbb{L}_{\mu}\right) := \min\{\mu(\f \Delta \g):\ \g \cong \l_{\mu}\}.$$

We write $\mu_{i}^{-}=\mu_{i}^{-}(\f)$
for the measure $\mu(\f_{\{ i\} }^{\varnothing})$,
and we write $\mu_{i}^{+} = \mu_i^+(\f)$ for the measure $\mu(\f_{\{ i\} }^{\{ i\} })$.
By the isoperimetric inequality (Theorem \ref{thm:edge-iso}), we may write $I[\f_{\left\{ i\right\} }^{\left\{ i\right\} }]=I[\l_{\mu_{i}^{+}}]+\epsilon_{i}^{+}$,
where $\epsilon_{i}^{+}=\epsilon_{i}^{+}\left(\f\right) \geq 0$, and we
use the notations
\[
\mu_{i,j}^{++},\ \mu_{i,j}^{+-},\ \mu_{i,j}^{-+},\ \mu_{i,j}^{--},\ \epsilon_{i}^{-},\ \epsilon_{i,j}^{++},\ \epsilon_{i,j}^{+-},\ \epsilon_{i,j}^{-+},\ \epsilon_{i,j}^{--},
\]
defined similarly. For $B\subset\left[n\right],$ we write $\s_{B} := \{S \subset [n]:\ B \subset S\}$
for the subcube of all subsets of $[n]$ that contain $B$, and if
$C\subset B$, we write $\s_{B}^{C} := \{S \subset [n]:\ S \cap B = C\}$ for the subcube of all subsets
of $[n]$ that intersect $B$ on the set $C$. If $j \in [n]$, we write $\d_j := \{S \subset [n]:\ j \in S\}$ for the `dictatorship' consisting of all sets containing $j$.

We say that a family $\l\subset\p\left(\left[n\right]\right)$
is \emph{lexicographically ordered} if it is an initial segment of the lexicographic ordering on $\pn$. We say
that a family $\f\subset \pn$ is {\em monotone increasing} (or just {\em increasing}) if it is closed under taking supersets, i.e. whenever $A\subset  B\subset \left[n\right]$
and $A\in\f$, we have $B \in \f$.

\subsection{Influences}

Using the notation above, we may define the $i$th influence of a family
$\f \subset \pn$ by
\[
\Inf_{i}\left[\f\right] =\Pr_{A\sim\pn}[|\f \cap \{A, A\Delta \{i\}\}|=1].
\]
As mentioned in the introduction, we have
$$I\left[\f\right] =\sum_{i=1}^{n}\Inf_{i}\left[\f\right] = \frac{|\partial \f|}{2^{n-1}},$$
i.e. the total influence of $\f$ is the normalized edge boundary of $\f$. We may therefore restate our main theorem (Theorem \ref{thm:main}) as follows.

\begin{thm*}
There exists an absolute constant $C>0$ such that
the following holds. Let $\epsilon>0$, let $\f\subset\pn$ be a family of measure $\mu$, and suppose that $I\left[\f\right]\le I\left[\l_{\mu}\right]+\epsilon$.
Then there exists a family $\g \subset \pn$ weakly isomorphic to $\l_{\mu}$,
such that $\mu\left(\f\Delta\g\right)\le C\epsilon$.
\end{thm*}
\noindent (Note that the constant $C$ above is half the constant in the original statement.) It will be more convenient for us to work with the above reformulation.

If $\f \subset \p([n])$ and $i \in [n]$, we define the family of {\em $i$-pivotal
sets in $\f$} by
$$\mathcal{I}_{i}\left(\f\right):=\left\{ A\in\f\,:\, A\Delta \{i\}\notin\f\right\}.$$
Note that we have
$$\Inf_{i}\left[\f\right]=\frac{\left|\mathcal{I}_{i}\left(\f\right)\right|}{2^{n-1}}$$
for all $i \in [n]$.

The following lemma will be useful for relating the influence of a family $\f$ to the
influences of its slices.
\begin{lem}
\label{Lem: influences}If $S\subset\left[n\right]$ and $\f \subset \pn$, then
\begin{align*}
I\left[\f\right] =\frac{1}{2^{\left|S\right|}}\sum_{B\subset S}I\left[\f_{S}^{B}\right]+\frac{1}{2^{\left|S\right|-1}}\sum_{B\subset S}\sum_{i\in B}\mu\left(\f_{S}^{B}\Delta\f_{S}^{B\backslash\left\{ i\right\} }\right)  =\mathbb{E}_{B\sim\p\left(S\right)}I\left[\f_{S}^{B}\right]+\sum_{i\in S}\Inf_{i}\left[\f\right].
\end{align*}

\end{lem}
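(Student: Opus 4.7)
The plan is to decompose the total influence $I[\f]=\sum_{i=1}^{n}\Inf_{i}[\f]$ by splitting the index set $[n]$ into the parts $S$ and $[n]\setminus S$, and to interpret each piece in terms of the $2^{|S|}$ slices $\{\f_{S}^{B}:B\subset S\}$. The geometric point is that each boundary edge $\{A,A\Delta\{i\}\}$ of $\f$ is of exactly one of two types: if $i\notin S$, the flip preserves $A\cap S$, so the edge lies entirely inside a single slice $\f_{S}^{B}$ with $B=A\cap S$; if $i\in S$, the edge connects the two slices $\f_{S}^{B}$ and $\f_{S}^{B\setminus\{i\}}$ (with $i\in B$) and fixes the coordinates outside $S$.

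For $i\notin S$, the map $A\mapsto(A\cap S,A\setminus S)$ gives a bijection between $\mathcal{I}_{i}(\f)$ and the disjoint union over $B\subset S$ of $\mathcal{I}_{i}(\f_{S}^{B})\subset\p([n]\setminus S)$, so $|\mathcal{I}_{i}(\f)|=\sum_{B\subset S}|\mathcal{I}_{i}(\f_{S}^{B})|$. Using $\Inf_{i}[\f]=|\mathcal{I}_{i}(\f)|/2^{n-1}$ together with $\Inf_{i}[\f_{S}^{B}]=|\mathcal{I}_{i}(\f_{S}^{B})|/2^{n-|S|-1}$, and summing over $i\in[n]\setminus S$, yields
\[
\sum_{i\notin S}\Inf_{i}[\f]=\frac{1}{2^{|S|}}\sum_{B\subset S}I[\f_{S}^{B}]=\mathbb{E}_{B\sim\p(S)}I[\f_{S}^{B}].
\]

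For $i\in S$ and each $B\subset S$ with $i\in B$, the map $C\mapsto\{B\cup C,(B\setminus\{i\})\cup C\}$ is a bijection between $\f_{S}^{B}\Delta\f_{S}^{B\setminus\{i\}}$ and the set of $i$-boundary edges of $\f$ whose endpoint containing $i$ has intersection $B$ with $S$. Each $i$-boundary edge of $\f$ arises this way for exactly one such $B$ (namely $B=A\cap S$ for the endpoint $A$ with $i\in A$), so $|\mathcal{I}_{i}(\f)|=\sum_{B\subset S,\,i\in B}|\f_{S}^{B}\Delta\f_{S}^{B\setminus\{i\}}|$. Converting $|\f_{S}^{B}\Delta\f_{S}^{B\setminus\{i\}}|=2^{n-|S|}\mu(\f_{S}^{B}\Delta\f_{S}^{B\setminus\{i\}})$ and dividing by $2^{n-1}$, then summing over $i\in S$ and exchanging the order of summation, gives
\[
\sum_{i\in S}\Inf_{i}[\f]=\frac{1}{2^{|S|-1}}\sum_{B\subset S}\sum_{i\in B}\mu(\f_{S}^{B}\Delta\f_{S}^{B\setminus\{i\}}).
\]

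Adding the two displayed identities produces the first expression in the lemma, and substituting this second identity back in reverse gives the second expression. There is no genuine obstacle in the proof; the only point requiring care is keeping track of the different normalizing powers of $2$ (namely $2^{n}$, $2^{n-1}$, $2^{n-|S|}$, $2^{n-|S|-1}$) used in passing between measures, influences, and raw cardinalities for $\f\subset\pn$ versus $\f_{S}^{B}\subset\p([n]\setminus S)$.
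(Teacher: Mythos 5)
Your proof is correct, and since the paper omits the argument as ``straightforward,'' there is no alternative to compare against; the decomposition of $I[\f]=\sum_i\Inf_i[\f]$ into coordinates inside and outside $S$, with the careful normalization bookkeeping, is exactly the natural computation the authors had in mind.
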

\noindent The proof is straightforward, and we omit it.
 
\subsection{Shifting}

The following shifting operator $\mathcal{S}_{S,T}$ was introduced by Erd\H{o}s,
Ko and Rado \cite{EKR} in the case $\left|S\right|=\left|T\right|=1$; for larger values of $|S|$ or $|T|$, it was introduced by Daykin \cite{Daykin}.
\begin{defn}
Let $n\in\mathbb{N}$, let $\f\subset\mathcal{P}\left(\left[n\right]\right),$
and let $S,T\subset\left[n\right]$ with $S \cap T= \emptyset$. For a set
$A\in\f$, we define
$$\mathcal{S}_{ST}\left(A\right):= \begin{cases} \left(A\backslash S\right)\cup T & \mbox{ if } S \subset A,\ A \cap T = \emptyset \mbox{ and } \left(A\backslash S\right)\cup T\notin\f,\\
A &  \mbox{ otherwise.}\end{cases}$$
We define $\s_{ST}\left(\f\right):=\left\{ \s_{ST}\left(A\right)\,:\, A\in\f\right\}$.
\end{defn}
Observe that $\s_{ST}\left(\f\right)$ is the family $\g \subset \pn$ such that
$\g_{S\cup T}^{B}=\f_{S\cup T}^{B}$ for any $B\ne S,T$, such
that $\g_{S\cup T}^{S}=\f_{S\cup T}^{S}\cap\f_{S\cup T}^{T}$, and
such that $\g_{S\cup T}^{T}=\f_{S\cup T}^{S}\cup\f_{S\cup T}^{T}$.

These shifting operators are known to be a very useful tool in extremal combinatorics. They were used by Frankl \cite{Frankl86} to obtain stability results for the Erd\H{o}s-Ko-Rado theorem \cite{EKR}, and were recently applied by the authors in \cite{Keller Lifshitz} to obtain a stability result for the Ahlswede-Khachatrian theorem~\cite{AK}, thus proving a conjecture of Friedgut~\cite{Friedgut08}. A major part of our argument is based on the method of \cite{Keller Lifshitz}.

The following lemma says that if a family $\f$ is stable under `lower-order' shifts, then a shifting operator cannot increase the total influence of $\f$.
\begin{lem}
\label{lem:shifting dec inf}Let $\f\subset\pn$, and let $S,T\subset\left[n\right]$
with $S \cap T = \emptyset$ and $\left|S\right|\ge\left|T\right|$. Suppose
that $\s_{S'T}\left(\f\right)=\f$ for each $S'\subset S$ with
$|S'| = \left|S\right|-1$. Then $I\left[\s_{ST}\left(\f\right)\right]\le I\left[\f\right]$. \end{lem}
\begin{proof}
Write $\g=\s_{ST}\left(\f\right)$. By Lemma \ref{Lem: influences},
we have
\[
I\left[\f\right]=\sum_{i\in\left[n\right]\backslash (S\cup T)}\Inf_{i}\left[\f\right]+\mathbb{E}_{B\sim\p\left(\left[n\right]\backslash (S\cup T)\right)}I\left[\f_{\left[n\right]\backslash (S\cup T)}^{B}\right]
\]
and
\begin{align*}
I\left[\g\right] & =\sum_{i\in\left[n\right]\backslash (S\cup T)}\Inf_{i}\left[\g\right]+\mathbb{E}_{B\sim\p\left(\left[n\right]\backslash (S\cup T)\right)}I\left[\g_{\left[n\right]\backslash (S\cup T)}^{B}\right].
\end{align*}
To prove the claim, it suffices to show that for any family $\f \subset \pn$ and any $i \notin S\cup T$, we have $\Inf_{i}\left[\f\right]\ge\Inf_{i}\left[\s_{ST}\left(\f\right)\right]$, and that for any family $\f\subset\p\left(S\cup T\right)$ such that $\s_{S'T}\left(\f\right)=\f$ for each $S'\subset S$ with
$|S'| = \left|S\right|-1$, we have $I\left[\s_{ST}\left(\f\right)\right]\le I\left[\f\right]$. The verification
of the former assertion is straightforward, and we leave it to the reader. 

To prove the latter assertion, we may assume that $\s_{ST}\left(\f\right) \neq \f$; then $S \in \f$, $T \notin \f$ and $\s_{ST}\left(\f\right) = (\f \setminus \{S\}) \cup \{T\}$. Note that $S' \notin \f$ for all $S'\subset S$
with $|S'|=|S|-1$, and that $T' \in \f$ for any $T'\supset T$
with $|T'|=|T|+1$. (Indeed, if $S'\subset S$ with $|S'| = |S|-1$ and $S' \in \f$, then we have $T=\s_{S'T}\left(S'\right)\in\f$, contradicting our assumption. Similarly, for each $T'\supset T$ with $|T'|=|T|+1$, we have $T'=\s_{\left(\left[S\cup T\right]\backslash T'\right)T}\left(S\right)\in\f$.) Therefore, $|\partial \left(\s_{ST}\left(\f\right)\right)| \leq |\partial \f| - 2(|S|-|T|) \leq |\partial \f|$, as required.
\end{proof}

We also need the following well-known lemma on the so-called `monotonization
operators' $\s_{\varnothing \{i\}}$ (see e.g.\ \cite{KKL}).

\begin{lem}
\label{lem:mon dec inf}
Let $i \in [n]$ and let $\f\subset\pn$. Then
\[
\Inf_{j}\left[\s_{\varnothing \{i\}}\left(\f\right)\right]\le\Inf_{j}\left[\f\right]
\]
for each $j\in [n]$, and $I\left[\s_{\varnothing \left\{ i\right\} }\left(\f\right)\right]\le I\left[\f\right]$.
\end{lem}

\section{\label{sec:Components-of-the} Reduction of Theorem \ref{thm:main} }

In this section we reduce Theorem \ref{thm:main} to the following proposition.
\begin{prop}
\label{prop:implies Theorem main}There exist absolute constants $c_{1},c_{2}>0$
such that the following holds. Let $0 < \mu\le\frac{1}{2}$, let $0 \leq \epsilon\le c_{1}\mu$,
and let $\f\subset\pn$ be a family with $\mu\left(\f\right)=\mu$ and $I\left[\f\right]\leq I\left[\l_{\mu}\right]+\epsilon$. Then there exists a family $\g$ weakly isomorphic to $\f$ such that one of the following holds.
\begin{itemize}
\item Case (1): $c_2 \mu_{1}^{-}\left(\g\right)+\frac{1}{2}\epsilon_{1}^{+}\left(\g\right)\le\epsilon$, or else
\item Case (2): $c_{2}\mu\left(\g\backslash\s_{\left\{ 1,2\right\} }\right)+\frac{1}{4}\epsilon_{1,2}^{++}\left(\g\right)\le\epsilon$.
\end{itemize}
\end{prop}
Intuitively, Proposition \ref{prop:implies Theorem main} says that
there is a family $\g$ weakly isomorphic to $\f$, such that one
of the following holds: either (1) $\g$ is essentially contained in the
dictatorship $\d_{1}$, and the `essential part' $\g_{\left\{ 1\right\} }^{\left\{ 1\right\} }$
has small total influence, or (2) $\g$ is essentially contained in the subcube $\s_{\left\{ 1,2\right\} }$, and the `essential part' $\g_{\left\{ 1,2\right\} }^{\left\{ 1,2\right\} }$ has small total influence.

\begin{remark}
Case (1) is in a sense the `simpler' case, and it is natural to ask whether Case (2) can be removed, but it cannot. Indeed, for any $c_1,c_2>0$, if $t$ is sufficiently large depending on $c_1$ and $c_2$, then the family
$$\f = \{S \subset [n]:\ \{1,2\} \subset S,\ S\cap \{3,4,\ldots,t\} \neq \emptyset\} \cup \{S \subset [n]:\ \{3,4,\ldots,t\} \subset S\}$$
satisfies the hypotheses of Proposition \ref{prop:implies Theorem main}, but Case (1) does not occur for $\f$. To see this, let $t \geq 4$ and suppose that Case (1) occurs for $\f$. Let $\mu := \mu(\f) = \tfrac{1}{4}+2^{-(t-1)}$ and let $\epsilon = I[\f] - I[\l_{\mu}]$. Let $\g$ be a family weakly isomorphic to $\f$ such that 
$$c_2 \mu_{1}^{-}\left(\g\right)+\tfrac{1}{2}\epsilon_{1}^{+}\left(\g\right)\le\epsilon.$$
Then there exists $\pi \in \Sym([n])$ and $D \subset [n]$ such that $\g = X_{D}(\pi(\f))$. We have
$$\l_{\mu} = \{S \subset [n]:\ \{1,2\} \subset S\} \cup \{S \subset [n]:\ 1 \in S,\ 2 \notin S,\ \{3,4,\ldots,t-1\} \subset S\}.$$
It is easy to check that $I[\f] = 1+8t\cdot 2^{-t} -24 \cdot 2^{-t}$ and $I[\l_\mu] = 1+4t\cdot 2^{-t} - 12 \cdot 2^{-t}$, and therefore
$$\epsilon = I[\f]-I[\l_{\mu}] = 4t\cdot 2^{-t} - 12 \cdot 2^{-t} \leq c_1 \mu$$
if $t$ is sufficiently large depending on $c_1$.
Observe that
$$\mu_1^{+}(\f) = \mu_2^{+}(\f) = \tfrac{1}{2}, \quad \min\{\mu_j^+(\f),\mu_j^{-}(\f)\} = \tfrac{1}{4}-2\cdot 2^{-t} \geq \tfrac{1}{8} \ \forall j \geq 3,$$
and that $\f$ is invariant under permuting the coordinates 1 and 2. Hence, if $t$ is large enough that $c_2 \cdot \tfrac{1}{8} >\epsilon = 4t\cdot 2^{-t} - 12 \cdot 2^{-t}$, then we may assume that $D = \emptyset$ and $\pi = \Id$, i.e.\ $\g$ can be obtained from $\f$ without flipping or permuting any coordinates, so $\g=\f$. Hence,
\begin{equation} \label{eq:1-is-good} c_2 \mu_{1}^{-}\left(\f\right)+\tfrac{1}{2}\epsilon_{1}^{+}\left(\f\right)\leq \epsilon = 4t\cdot 2^{-t} - 12 \cdot 2^{-t}.\end{equation}
We have
\begin{align*} \f_{\{1\}}^{\{1\}} = &\ \{S \subset [n] \setminus \{1\}:\ 2 \in S,\ S \cap \{3,4,\ldots,t\} \neq \emptyset\} \cup\{S \subset [n] \setminus \{1\}:\ 2 \notin S,\ \{3,4,\ldots,t\} \subset S\}
\end{align*}
and therefore $I[\f_{\{1\}}^{\{1\}}] = 1+8t \cdot 2^{-t} - 24 \cdot 2^{-t}$. Since $\mu(\f_{\{1\}}^{\{1\}}) = \mu_1^{+}(\f) = \tfrac{1}{2}$ and $I[\l_{1/2}] = 1$, we have
$$\epsilon_1^+(\f) = I[\f_{\{1\}}^{\{1\}}] - I[\l_{1/2}] = 1+8t \cdot 2^{-t} - 24 \cdot 2^{-t} - 1 = 8t \cdot 2^{-t} - 24 \cdot 2^{-t}.$$
Finally, we have $\mu_1^{-}(\f) = 4 \cdot 2^{-t}$. Substituting the latter two facts into (\ref{eq:1-is-good}) yields
$$c_2 \cdot 4 \cdot 2^{-t} + \tfrac{1}{2}(8t \cdot 2^{-t} - 24 \cdot 2^{-t}) \leq \epsilon = 4t\cdot 2^{-t} - 12 \cdot 2^{-t},$$
a contradiction.
\end{remark}

We now show how to deduce Theorem \ref{thm:main} from Proposition \ref{prop:implies Theorem main}.
We state Theorem \ref{thm:main} again below (in the influence form), for the convenience of the
reader.
\begin{thm*}
There exists an absolute constant $C>0$ such that the following
holds. Let $\f\subset\pn$ be a family of measure $\mu\left(\f\right)=\mu$,
and suppose that $I\left[\f\right]\le I\left[\l_{\mu}\right]+\epsilon$.
Then there exists a family $\g \subset \pn$ weakly isomorphic to $\l_{\mu}$,
such that $\mu\left(\f\Delta\g\right)\le C\epsilon$.\end{thm*}
\begin{proof}
We prove the theorem by induction on $n$. If $n=1$, then $\f$ itself
is weakly isomorphic to $\l_{\mu}$. Let $n\ge2$, and assume the statement of the theorem holds for smaller values of $n$.

We now make several reductions. Firstly, we note that the theorem
holds for $\f$ if and only if it holds for its complement $\f^{c}$, since the complement of a lexicographically ordered family is weakly isomorphic to a lexicographically ordered family. Thus, we may assume w.l.o.g.\ that $\mu\left(\f\right)\le\frac{1}{2}$. Secondly, note that the conclusion of the theorem holds trivially if $C\epsilon \geq 2\mu$. So we may assume throughout that $\epsilon < \frac{2\mu}{C}$.

Provided $C \geq 2/c_1$, we have $\epsilon \leq c_1\mu$, so either Case (1) or Case (2) of Proposition \ref{prop:implies Theorem main} occurs. First suppose that Case (1) occurs. By replacing $\f$ by a family $\g$ weakly isomorphic to $\f$ if necessary, we may assume that
\begin{equation} \label{eq:replace-assumption} c_2 \mu_{1}^{-}\left(\f\right)+\tfrac{1}{2}\epsilon_{1}^{+}\left(\f\right)\le\epsilon.\end{equation}
Assume w.l.o.g.\ that the minimum of $\mu(\f_{\left\{ 1\right\} }^{\left\{ 1\right\} }\Delta\h)$
over all families $\h$ weakly isomorphic to $\l_{\mu(\f_{\left\{ 1\right\} }^{\left\{ 1\right\} })}$
is attained where $\h = \l_{\mu(\f_{\left\{ 1\right\} }^{\left\{ 1\right\} })}$, i.e.\ where $\h$ is a lexicographically ordered family with respect to the usual ordering $2\le3\le\cdots\le n$.

Note that
\begin{equation}
\mu\left(\f\Delta\l_{\mu}\right)=2\mu\left(\f\backslash\l_{\mu}\right)=\mu\left(\f_{\left\{ 1\right\} }^{\left\{ 1\right\} }\backslash \left(\l_{\mu}\right)_{\{1\}}^{\{1\}}\right)+\mu\left(\f_{\left\{ 1\right\} }^{\varnothing}\right).\label{eq:mu(F)}
\end{equation}
The induction hypothesis, and our assumption above on the families $\h$, imply that
\begin{equation}
\mu\left(\f_{\left\{ 1\right\} }^{\left\{ 1\right\} }\backslash \left(\l_{\mu}\right)_{\{1\}}^{\{1\}}\right)\le\tfrac{1}{2}\mu\left(\f_{\left\{ 1\right\} }^{\left\{ 1\right\} }\Delta\mathcal{L}_{\mu_{1}^{+}}\right)\le\tfrac{1}{2}C\epsilon_1^{+}.\label{eq: mu(Fone)}
\end{equation}
 Rearranging (\ref{eq:replace-assumption}), we have
\begin{equation}
\epsilon_{1}^{+} \le 2(\epsilon-c_2 \mu_1^{-}).\label{eq:epsilons}
\end{equation}
Putting together (\ref{eq:mu(F)}), (\ref{eq: mu(Fone)}) and (\ref{eq:epsilons}), we obtain
\[
\mu\left(\f\Delta\l_{\mu}\right)\le C(\epsilon-c_2\mu_1^{-})+\mu_1^{-}\le C\epsilon,
\]
provided $C\ge\frac{1}{c_2}$. This completes the proof in Case (1).

Suppose now that Case (2) occurs. Replacing $\f$ by a family $\g$ weakly isomorphic to it, we may assume that
\begin{equation}
c_{2}\mu\left(\f\backslash\s_{\left\{ 1,2\right\} }\right)+\tfrac{1}{4}\epsilon_{1,2}^{++}\left(\f\right)\le\epsilon.\label{eq:4}
\end{equation}

Assume w.l.o.g.\ that the minimum of $\mu(\f_{\left\{ 1,2\right\} }^{\left\{ 1,2\right\} }\Delta\h)$
over all families $\h$ weakly isomorphic to $\l_{\mu(\f_{\left\{ 1,2\right\} }^{\left\{ 1,2\right\} })}$
is attained where $\h = \l_{\mu(\f_{\left\{ 1,2\right\} }^{\left\{ 1,2\right\} })}$, i.e.\ where $\h$ is a lexicographically ordered family with respect to the usual ordering $3\le\cdots\le n$.

We now have
\begin{align}
\mu\left(\f\Delta\mathcal{L}_{\mu}\right) & \le\tfrac{1}{4}\mu\left(\f_{\left\{ 1,2\right\} }^{\left\{ 1,2\right\} }\Delta\mathcal{L}_{\mu_{1,2}^{++}\left(\f\right)}\right)+2\mu\left(\f\backslash\s_{\left\{ 1,2\right\} }\right).\label{eq:5}
\end{align}
By the induction hypothesis, we have
\begin{equation}
\mu\left(\f_{\left\{ 1,2\right\} }^{\left\{ 1,2\right\} }\Delta\mathcal{L}_{\mu_{1,2}^{++}\left(\f\right)}\right)\le C\epsilon_{1,2}^{++}.\label{eq:6}
\end{equation}

Putting together (\ref{eq:4}), (\ref{eq:5}) and (\ref{eq:6}), we obtain
\[
\mu\left(\f\Delta\mathcal{L}_{\mu}\right)\le C\left(\epsilon-c_{2}\mu\left(\f\backslash\s_{\left\{ 1,2\right\} }\right)\right)+2\mu\left(\f\backslash\s_{\left\{ 1,2\right\} }\right)\le C\epsilon,
\]
where the last inequality holds provided $C\ge\frac{2}{c_{2}}$. This completes the proof.
\end{proof}

\section{Fractional lexicographic families and their properties}
\label{sec:frac-lex}


A {\em fractional lexicographic family of order $n$} is a function $\f\colon\p\left(\left[n\right]\right)\to\mathbb{D}$, where $\mathbb{D} = \{\tfrac{b}{2^a}:\ a \in \mathbb{N},\ b \in \{0,1,\ldots,2^a\}\}$ denotes the set of dyadic rationals between 0 and 1. Intuitively, a fractional lexicographic family $\f\colon\p\left(\left[n\right]\right)\to\mathbb{D}$ represents a (non-fractional) family $\g\subset\p\left([n + m]\right)$, for some $m \in \mathbb{N}$,
such that $\g_{\left[n\right]}^{B}$ is a lexicographically ordered
family of measure $\f\left(B\right)$, for each $B\subset [n]$. Formally, if $\f\colon\pn\to\mathbb{D}$, we choose any $m \in \mathbb{N}$ such that $2^m \f(\pn) \subset \mathbb{Z}$, and associate to $\f$ the family\[
\f_{\text{ass}}\subset\p\left([n+m]\right)
\]
such that $(\f_{\text{ass}})_{[n]}^{B}$ is the lexicographically ordered family of measure $\f(B)$, for each $B \subset [n]$. If $\f$ is a fractional lexicographic family, then by a slight abuse of notation we define $\mu(\f)$, $I[\f]$, $\mu_i^+(\f)$ and $\mu_i^-(\f)$ (for $i \in [n]$) to be the corresponding quantities for the associated family $\f_{\text{ass}}\subset\p\left(\left[m+n\right]\right)$; it is easy to see that these are independent of the choice of $m$, provided we define $\Inf_i[\f_{\text{ass}}] = 0$ for all $i > n+m$.

The usefulness of fractional lexicographic families comes from the fact that
inductive arguments enable us to reduce statements about general families to
statements about fractional lexicographic families of order $n$, for small $n$. Specifically,
we need a thorough analysis of the case $n=1$ and the case $n=2$. These statements encapsulate the idea that families of small total influence can only have their measure split between two disjoint codimension-1 subcubes (or between four disjoint codimension-2 subcubes) in certain ways. The proofs of the statements  are technical and the reader is advised (at least at first reading) to read the statements of the lemmas without going into their proofs.

\subsection{Properties of fractional lexicographic families of order 1}

If $0 \leq \mu^-,\mu^+\leq 1$, we denote by $\l_{\mu^{-},\mu^{+}}$ the fractional lexicographic family $\l\colon\left\{ \varnothing,\left\{ 1\right\} \right\} \to\left[0,1\right]$ of order 1, with $\l\left(\left\{ 1\right\} \right)=\mu^{+}$ and $\l\left(\varnothing\right)=\mu^{-}$.

Let $\mu=2^{-j}+r$, where $j \geq 2$ and $0<r\le2^{-j}$. Observe that
\begin{equation}\label{Eq:Aux1}
\mu_{i}^{-}\left(\l_{\mu}\right) \begin{cases} = 0 \mbox{ if } i\le j-1,\\ = 2r \mbox{ if } i=j,\\
\geq \frac{1}{2}\mu\mbox{ if }i \geq j+1.\end{cases}
\end{equation}
The next lemma says roughly that if a fractional lexicographic family $\l=\l_{\mu^{-},\mu^{+}}$ of order 1 has $0 < \mu^- \leq r$, then $I\left[\l_{\mu^{-},\mu^{+}}\right]$ is somewhat large.

\begin{lem}
\label{lem:1-lexicographical}Let $j \geq 2$, let $0<r\le2^{-j}$, let $\mu=2^{-j}+r$, and let $0 \leq \mu^{-}\le\mu^{+} \leq 1$ with
$\frac{\mu^{-}+\mu^{+}}{2}=\mu$. If $\mu^{-}\le r$, then $I\left[\l_{\mu^{-},\mu^{+}}\right]\ge I\left[\l_{\mu}\right] + 2\mu^{-}$.

If instead, $3r\le\mu^{-}\le \tfrac{1}{2}\mu$, then $I\left[\l_{\mu^{-},\mu^{+}}\right]\ge I\left[\l_{\mu}\right] + \tfrac{2}{3}\mu^-$.
\end{lem}

In order to prove the lemma, we need the following preparatory claim.
\begin{claim}
\label{Claim: r large mu large}Let $\mu=\frac{\mu^{-}+\mu^{+}}{2}=\frac{1}{4}+r$, where $0 < r \leq 1/4$ and $0 \leq \mu^-,\mu^+ \leq 1$. Suppose that $\mu^{-}\le r$. Then
\[
I\left[\l_{\mu^{-},\mu^{+}}\right]\ge I\left[\l_{\mu}\right]+2\mu^{-}.
\]
\end{claim}
\begin{proof}
Write $\l=\l_{\mu^{-},\mu^{+}}$. Then we may view $\l$ as a fractional lexicographical
family on $\p\left(\left[2\right]\right)$ such that $\l\left(\left\{ 1,2\right\} \right)=1$,
$\l\left(\varnothing\right)=\varnothing$, $\l\left(\left\{ 2\right\} \right)=2\mu^{-},$
and $\l\left(\left\{ 1\right\} \right)=2\mu^{+}-1$. Using Lemma
\ref{Lem: influences} (and writing $\l$ in place of $\l_{\textrm{ass}}$ in the first line of (\ref{eq:computation 1}) below), we have
\begin{align}
I\left[\l\right] & =\tfrac{1}{4}\sum_{B\subset\left\{ 1,2\right\} }I[\l_{\left\{ 1,2\right\} }^{B}]+\tfrac{1}{2}\left(\mu(\l_{\left\{ 1,2\right\} }^{\left\{ 1,2\right\} })-\mu(\l_{\left\{ 1,2\right\} }^{\left\{ 1\right\} })\right) + \tfrac{1}{2}\left(\mu(\l_{\left\{ 1,2\right\} }^{\left\{ 1,2\right\} })-\mu(\l_{\left\{ 1,2\right\} }^{\left\{ 2\right\} })\right) \label{eq:computation 1}\\
&\quad +\tfrac{1}{2}\left(\mu(\l_{\left\{ 1,2\right\} }^{\left\{ 1\right\} })-\mu(\l_{\left\{ 1,2\right\} }^{\varnothing})\right)+\tfrac{1}{2}\left(\mu(\l_{\left\{ 1,2\right\} }^{\left\{ 2\right\} })-\mu(\l_{\left\{ 1,2\right\} }^{\varnothing})\right)\nonumber \\
 & =\tfrac{1}{4}\left(I\left[\l_{2\mu^{-}}\right]+I\left[\l_{2\mu^{+}-1}\right]\right)+\l(\{1,2\})-\l(\varnothing).\nonumber
\end{align}
 Similarly, $\m := \l_{\mu}$ may viewed as a lexicographical family on $\p\left(\left[2\right]\right)$
with $\m\left(\varnothing\right)=\varnothing$, $\m\left(\left\{ 2\right\} \right)=\varnothing$,
$\m\left(\left\{ 1,2\right\} \right)=1$, and $\m\left(\left\{ 1\right\} \right)=4\mu-1$.
So as in (\ref{eq:computation 1}), we have
\begin{equation}
I\left[\m\right]=\tfrac{1}{4}I\left[\l_{4\mu-1}\right]+\l\left(\left\{ 1,2\right\} \right)-\l\left(\varnothing\right).\label{eq:computation1 prime}
\end{equation}
Putting (\ref{eq:computation 1}) and (\ref{eq:computation1 prime})
together, we have
\begin{align}
I\left[\l\right]-I\left[\m\right] & =\tfrac{1}{4}\left(I\left[\l_{2\mu^{-}}\right]+I\left[\l_{2\mu^{+}-1}\right]-I\left[\l_{4\mu-1}\right]\right).\label{eq:comp2}
\end{align}
We now consider the families $\f_{1}:=\l_{\frac{4\mu-1}{2}}$
and $\f_{2}:=\l_{2\mu^{-},2\mu^{+}-1}$. The isoperimetric
inequality implies that
\begin{equation}
I\left[\f_{2}\right]\ge I\left[\f_{1}\right].\label{eq:non comp1}
\end{equation}
Let us compute the influences of $\f_{1}$ and $\f_{2}$. We have
\begin{equation}
I\left[\f_{1}\right]=\tfrac{1}{2}I\left[\l_{4\mu-1}\right] + 4\mu-1\label{eq:comp 3}
\end{equation}
 and
\begin{equation}
I\left[\f_{2}\right]=\tfrac{1}{2}I\left[\l_{2\mu^{-}}\right]+\tfrac{1}{2}I\left[\l_{2\mu^{+}-1}\right]+2\mu^{+}-1-2\mu^{-}.\label{eq:comp4}
\end{equation}
 (Here, we used the fact that $2\mu^{+}-1 = 4\mu-2\mu^{-}-1=4r-2\mu^{-}\ge2\mu^{-}$.)

Combining (\ref{eq:comp2})-(\ref{eq:comp4}) yields
\begin{align*}
I\left[\l\right]-I\left[\m\right] & = \tfrac{1}{4}\left(I\left[\l_{2\mu^{-}}\right]+I\left[\l_{2\mu^{+}-1}\right]-I\left[\l_{4\mu-1}\right]\right)\\
 & =\tfrac{1}{2}\left(I\left[\f_{2}\right]-\left(2\mu^{+}-1-2\mu^{-}\right)\right) -\tfrac{1}{2}\left(I\left[\f_{1}\right]-\left(2\mu^{+}+2\mu^{-}-1\right)\right)\\
 & \ge2\mu^{-},
\end{align*}
 as required.
\end{proof}

We can now prove Lemma \ref{lem:1-lexicographical}.
\begin{proof}[Proof of Lemma \ref{lem:1-lexicographical}.]
We prove the first statement by induction on $j$. Suppose that $\mu^{-}\le r$. Since $\mu^{+}=2\mu-\mu^{-}>\left(\frac{1}{2}\right)^{j-1}$,
we have $j \geq 2$. Claim \ref{Claim: r large mu large}
implies the base case $j=2$. Let $j \geq 3$, and assume the statement holds for smaller values of $j$. Since $\mu \leq 1/8$, $(\l_{\mu^{-},\mu^{+}})_{\textrm{ass}}$ is contained
in the dictatorship $\d_{2}$. By Lemma \ref{Lem: influences}, we
have
\begin{align}
I\left[\l_{\mu^{-},\mu^{+}}\right] =\tfrac{1}{2}I\left[\left(\l_{\mu^{-},\mu^{+}}\right)_{\left\{ 2\right\} }^{\left\{ 2\right\} }\right]+\tfrac{1}{2}I\left[\left(\l_{\mu^{-},\mu^{+}}\right)_{\left\{ 2\right\} }^{\left\{ \varnothing\right\} }\right]+\Inf_{2}\left[\l_{\mu^{-},\mu^{+}}\right] =\tfrac{1}{2}I\left[\l_{2\mu^{-},2\mu^{+}}\right]+2\mu.\label{eq:8}
\end{align}
Similarly,
\begin{equation}
I\left[\l_{\mu}\right]=\tfrac{1}{2}I\left[\l_{2\mu}\right]+2\mu.\label{eq:9}
\end{equation}
 The induction hypothesis implies that
\begin{equation}
I\left[\l_{2\mu^{-},2\mu^{+}}\right] \geq I\left[\l_{2\mu}\right] + 4\mu^-.\label{eq:10}
\end{equation}
Combining (\ref{eq:8}), (\ref{eq:9}), and (\ref{eq:10}), we obtain $I\left[\l_{\mu^{-},\mu^{+}}\right] \geq I\left[\l_{\mu}\right] + 2\mu^-$, as required.

Now suppose that $3r \leq \mu^- \leq \tfrac{1}{2}\mu$. We proceed again by induction on $j$. First suppose $j=1$. Note that $I\left[\l_{1-\mu^{+},1-\mu^{-}}\right]=I\left[\l_{\mu^{-},\mu^{+}}\right]$,
and that $\mu\left(\l_{1-\mu^{+},1-\mu^{-}}\right)=\frac{1}{2}-r=\frac{1}{4}+\left(\frac{1}{4}-r\right)$.
We also have
\[
\mu^{+} = 2\mu-\mu^{-}\ge \tfrac{3}{4} + \tfrac{3}{2}r.
\]
Hence,
\[
1-\mu^{+}\leq \tfrac{1}{4}-\tfrac{3}{2}r\le\tfrac{1}{4}-r.
\]
Claim \ref{Claim: r large mu large}
implies that
\begin{align*}
I\left[\l_{\mu^{-},\mu^{+}}\right] =I\left[\l_{1-\mu^{+},1-\mu^{-}}\right] \ge I\left[\l_{1-\mu}\right]+ 2\left(1-\mu^{+}\right) & = I\left[\l_{\mu}\right]+ 2\left(1-2\mu+\mu^{-}\right) \\
& =I\left[\l_{\mu}\right] + 2\left(\mu^{-}-2r\right) \ge I\left[\l_{\mu}\right] + \tfrac{2}{3} \mu^-,
\end{align*}
 as desired. The inductive step is almost exactly the same as in the previous
case, relying on the fact that $(\l_{\mu^{-},\mu^{+}})_{\textrm{ass}}$ is contained
in the dictatorship $\d_{2}$.
\end{proof}

\subsection{Properties of fractional lexicographic families of order 2}

Let $\mu=2^{-j}+r$, where $j \geq 2$ and $0<r\le2^{-j}$, and let $\l\colon\p\left(\left[2\right]\right)\to\left[0,1\right]$ be
a fractional lexicographic family of order 2 and measure $\mu$. The following lemma says that if both $\mu_{1}^{-}\left(\l\right)$
and $\mu_{2}^{-}\left(\l\right)$ are `somewhat close' to $2r$ (which, by (\ref{Eq:Aux1}), is the value of $\mu_j(\l_{\mu})$), then $\l$ has `somewhat large' total influence. 

\begin{lem}\label{lem:2-lex}
There exists an absolute constant $c>0$ such that the following holds. Let $\mu=2^{-j}+r$, where $j \in \mathbb{N}$ and $r\le2^{-j}$. Let $\l\colon\p\left(\left[2\right]\right)\to\left[0,1\right]$
be a fractional lexicographic family of order 2 and measure $\mu$. Suppose that
$r\le\mu_{1}^{-}\left(\l\right)\le3r$, that $r\le\mu_{2}^{-}(\l) \le3r$,
and that $r\le c\mu$. Then $I\left[\l\right]\ge I\left[\l_{\mu}\right]+r/2$.

(In fact, we may take $c=1/6$.)
\end{lem}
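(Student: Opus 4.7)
Write $a=\l(\varnothing)$, $b=\l(\{1\})$, $c=\l(\{2\})$, $d=\l(\{1,2\})$, so that $\nu_i:=\mu_i^-(\l)$ satisfies $\nu_1=(a+c)/2$ and $\nu_2=(a+b)/2$. The hypothesis $r\le\mu/6$ forces $r\le 2^{-j}/5$, whence $a,b,c\le 6r$ and $d=4\mu-(a+b+c)\ge 2^{-j+2}-14r>0$; thus the mass of $\l$ is concentrated in the codimension-$2$ subcube $\s_{\{1,2\}}$.

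The first step is to apply Lemma~\ref{Lem: influences} with $S=\{1\}$ to both $\l$ and $\l_\mu$. Since $(\l_\mu)_{\mathrm{ass}}\subset\d_1$ for $\mu\le 1/2$, one gets $I[\l_\mu]=\tfrac{1}{2}I[\l_{2\mu}]+2\mu$, and in parallel
$$I[\l]=\tfrac{1}{2}\bigl(I[\l_{b,d}]+I[\l_{a,c}]\bigr)+2\mu-2\nu_1,$$
where $\l_{b,d}$ and $\l_{a,c}$ are now regarded as fractional lex families of order $1$ in the variable $\{2\}$, of measures $2\mu-\nu_1$ and $\nu_1$. The isoperimetric inequality on each slice gives $I[\l]\ge I[\l_{\nu_1,\,2\mu-\nu_1}]$. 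Applying Lemma~\ref{lem:1-lexicographical} to the order-$1$ family $\l_{\nu_1,2\mu-\nu_1}$ (using $\nu_1\le 3r\le\mu/2$), the small branch ($\nu_1\le r$) yields $I[\l_{\nu_1,2\mu-\nu_1}]\ge I[\l_\mu]+2\nu_1$ while the large branch ($\nu_1\ge 3r$) yields $\ge I[\l_\mu]+\tfrac{2}{3}\nu_1$. Each produces $I[\l]-I[\l_\mu]\ge 2r\ge r/2$ whenever $\nu_1\in\{r,3r\}$; the symmetric decomposition along coordinate $2$ handles $\nu_2\in\{r,3r\}$ analogously.

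The remaining (and principal) case is $\nu_1,\nu_2\in(r,3r)$. Here the additional gain must come from the generically non-lex internal structure of the slice families. I would apply Lemma~\ref{lem:1-lexicographical} individually to $\l_{a,c}$ (measure $\nu_1$, with $\mu^-=\min(a,c)$) and to $\l_{b,d}$ (measure $2\mu-\nu_1$, with $\mu^-=b$), extracting strict improvements $I[\l_{a,c}]\ge I[\l_{\nu_1}]+\Omega(\min(a,c))$ and $I[\l_{b,d}]\ge I[\l_{2\mu-\nu_1}]+\Omega(b)$ whenever the appropriate branch of Lemma~\ref{lem:1-lexicographical} applies. Combined with the preceding decomposition this gives $I[\l]-I[\l_\mu]\ge\Omega(b+\min(a,c))$. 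The algebraic constraints $a+c\ge 2r$ and $a+b\ge 2r$ then imply: if $a\le r/2$, then $b,c\ge 3r/2$, and we gain $\Omega(r)$ from $b$; if $a\ge r/2$ and $c\ge a$, we gain from $\min(a,c)\ge r/2$; the remaining sub-case ($c<r/2$ and $a$ large) is handled by the symmetric coordinate-$2$ decomposition, in which the roles of the slices are played by $\l_{c,d}$ and $\l_{a,b}$.

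The main technical obstacle is to verify that the correct (small- or large-$\mu^-$) branch of Lemma~\ref{lem:1-lexicographical} is always available for the relevant slice in each sub-case. This reduces to tracking the dyadic decompositions $\nu_1=2^{-j_1}+r_1$ and $2\mu-\nu_1=2^{-j_2}+r_2$ in terms of $j$ and $r$, and checking via $r\le 2^{-j}/5$ that each relevant $\mu^-$ lies in $(0,r_{\ast}]$ or $[3r_{\ast},\mu_{\ast}/2]$. The sharp value $c=1/6$ in the hypothesis is tailored precisely so that this case analysis closes.
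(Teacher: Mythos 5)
There is a genuine gap, on two related fronts.

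First, the identity you write,
\[
I[\l]=\tfrac{1}{2}\bigl(I[\l_{b,d}]+I[\l_{a,c}]\bigr)+2\mu-2\nu_1,
\]
is false in general. Lemma~\ref{Lem: influences} with $S=\{1\}$ gives $\Inf_{1}[\l]=\tfrac{1}{2}\bigl(|d-c|+|b-a|\bigr)$, which exceeds $\mu_1^+-\mu_1^-=2\mu-2\nu_1$ strictly whenever the slices are not compatibly nested (i.e.\ unless $b\ge a$ and $d\ge c$, or both reversed). If you correct the equality to $\ge$, your whole scheme reduces to extracting gains from Lemma~\ref{lem:1-lexicographical} applied slice-by-slice --- but the non-monotonicity excess $\Inf_1[\l]-(\mu_1^+-\mu_1^-)$ that you just discarded is, in general, exactly the quantity the lemma is about, and it cannot be recovered from the slices.

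Concretely, take $(a,b,c,d)=\bigl(\tfrac{1}{32},\,0,\,\tfrac{1}{32},\,\tfrac{1}{2}\bigr)$, so $\mu=\tfrac{9}{64}$, $j=3$, $r=\tfrac{1}{64}$, $\nu_1=2r$, $\nu_2=r$; all hypotheses of the lemma are satisfied (with $c=1/6$). But here all four coordinate slices are weakly isomorphic to lexicographic families: $\l_{a,c}=\l_{1/32,1/32}$ is a cylinder, $\l_{b,d}=\l_{0,1/2}$ and $\l_{a,b}=\l_{1/32,0}$ are dictated, and since $\{1/32,1/2\}=\{\mu_2^-(\l_{17/64}),\mu_2^+(\l_{17/64})\}$, the slice $\l_{c,d}=\l_{1/32,1/2}$ is also lex. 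Hence $\delta_1=\delta_2=0$ for both the coordinate-$1$ and the coordinate-$2$ decomposition, and your plan yields only the vacuous $I[\l]\ge I[\l_\mu]$. Yet a direct computation gives $I[\l]-I[\l_\mu]=\tfrac{1}{32}=2r$, and the whole gain is supplied by $\Inf_1[\l]-(\mu_1^+-\mu_1^-)=\tfrac{1}{32}$. Independently of this, the ``dead zone'' you flag (when $\min(a,c)\in(r_1,3r_1)$ for the dyadic decomposition $\nu_1=2^{-j_1}+r_1$ of the slice measure) really does occur within the admissible range, so that part of the case analysis does not close either.

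The paper's proof avoids both obstacles. Rather than passing to order-$1$ slices and recursing into Lemma~\ref{lem:1-lexicographical}, it constructs auxiliary order-$2$ fractional families $\l'$ and $\l''$ that zero out individual slots and merge mass within the $\{1\}$-slot, and it feeds these into the dead-zone-free inequality
\[
I[\l_{\mu^-}]+I[\l_{\mu^+}]-I[\l_{\mu^-+\mu^+}]\ \ge\ 4\min\{\mu^-,\mu^+\},
\]
valid for all $\mu^-,\mu^+\ge 0$ with $\mu^-+\mu^+\le 1$. That $\min$ term is precisely the non-monotonicity excess your decomposition loses, so the route through order-$1$ slices + Lemma~\ref{lem:1-lexicographical} alone cannot be pushed through without an additional idea.
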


\begin{proof}
Suppose w.l.o.g.\ that $\l(\{2\}) \leq \l(\{1\})$.
We split into two cases: $\l\left(\left\{ 2\right\} \right)\ge\frac{r}{2}$,
and $\l\left(\left\{ 2\right\} \right)\le\frac{r}{2}$.

First suppose that $\l\left(\left\{ 2\right\} \right)\ge\frac{r}{2}$. Note
that, by hypothesis,
$$\tfrac{1}{2} \l(\emptyset) + \tfrac{1}{2} \l(\{2\}) = \mu_1^-(\l) \leq 3r,\quad \tfrac{1}{2} \l(\emptyset) + \tfrac{1}{2} \l(\{1\}) = \mu_2^-(\l) \leq 3r,$$
so
\begin{align*} \l(\{1,2\}) = 4\mu - \l(\emptyset)- \l(\{1\}) -\l(\{2\}) \geq 4\mu - 2\l(\emptyset)- \l(\{1\}) -\l(\{2\}) \geq 4\mu-12r \geq 4r,\end{align*}
provided $c \leq 1/4$. By Lemma \ref{Lem: influences}, we have
\begin{equation}
I\left[\l\right]\ge\tfrac{1}{2}I[\l_{\left\{ 2\right\} }^{\left\{ 2\right\} }]+\tfrac{1}{2}I[\l_{\left\{ 2\right\} }^{\varnothing}]+\mu_{2}^{+}\left(\l\right)-\mu_{2}^{-}\left(\l\right).\label{eq:11}
\end{equation}

Let $\l'$ be the fractional lexicographic family of order 2, such that
\[
\l'\left(\varnothing\right)=0,\ \l'\left(\left\{ 1\right\} \right)=2\mu_{2}^{-}(\l),\ \l'\left(\left\{ 2\right\} \right)=\l\left(\left\{ 2\right\} \right),\ \l'\left(\left\{ 1,2\right\} \right)=\l\left(\left\{ 1,2\right\} \right).
\]
Note that $\mu_2^-(\l) \leq 1/2$ provided $c \leq 1/6$, that $\l'(\{2\}) = \l(\{2\}) \leq \l(\{1\}) \leq 2\mu_2^-(\l) = \l'(\{1\})$, and that $\l'\left(\left\{ 1,2\right\} \right)\ge\l'\left(\left\{ 1\right\} \right)$ provided $c \leq 2/9$.

By Lemma \ref{Lem: influences}, we have
\begin{equation}
I\left[\l'\right]=\tfrac{1}{2}I[\l_{\left\{ 2\right\} }^{\left\{ 2\right\} }]+\tfrac{1}{2}I[\l_{\mu_{2}^-}]+\mu_{2}^{+}\left(\l\right)-\mu_{2}^{-}\left(\l\right).\label{eq:12}
\end{equation}
 By the isoperimetric inequality, (\ref{eq:11}) and (\ref{eq:12}), we have
\begin{equation}
I\left[\l\right]\ge I\left[\l'\right].\label{eq:13}
\end{equation}
Also by Lemma \ref{Lem: influences}, we have
\begin{equation}
I\left[\l'\right]=\tfrac{1}{4}I\left[\l_{\l\left( \left\{ 1,2\right\} \right) }\right]+\tfrac{1}{4}I\left[\l_{\l'\left(\left\{ 1\right\} \right)}\right]+\tfrac{1}{4}I\left[\l_{\l'\left(\left\{ 2\right\} \right)}\right]+\l\left(\left\{ 1,2\right\} \right).\label{eq:13.5}
\end{equation}
 Let $\l''$ be the fractional lexicographic family of order 2, such that
\[
\l''\left(\left\{ \varnothing\right\} \right)=0,\ \l''\left(\left\{ 2\right\} \right)=0,\ \l''\left(\left\{ 1\right\} \right)=\l'\left(\left\{ 1\right\} \right)+\l'\left(\left\{ 2\right\} \right),\ \l''\left(\left\{ 1,2\right\} \right)=\l\left(\left\{ 1,2\right\} \right).
\]
 Note that $\l'(\{1\})+\l'(\{2\}) \leq \l(\{1,2\})$ provided $c \leq 1/6$. By Lemma \ref{Lem: influences}, we have
\begin{equation}
I\left[\l''\right]=\tfrac{1}{4}I\left[\l_{\l\left( \left\{ 1,2\right\} \right) }\right]+\tfrac{1}{4}I\left[\l_{\l'\left(\left\{ 1\right\} \right)+\l'\left(\left\{ 2\right\} \right)}\right]+\l\left(\left\{ 1,2\right\} \right).\label{eq:14}
\end{equation}
The isoperimetric inequality implies that
\begin{align*}
I\left[\l_{\l'\left(\left\{ 2\right\} \right),\l'\left(\left\{ 1\right\} \right)}\right] \ge I\left[\l_{\frac{\l'\left(\left\{ 2\right\} \right)+\l'\left(\left\{ 1\right\} \right)}{2}}\right] & =I\left[\l_{0,\l'\left(\left\{ 1\right\} \right)+\l'\left(\left\{ 2\right\} \right)}\right] \\
& = \tfrac{1}{2} I\left[\l_{\l'\left(\left\{ 1\right\} \right)+\l'\left(\left\{ 2\right\} \right)}\right] + \l'\left(\left\{ 1\right\} \right)+\l'\left(\left\{ 2\right\} \right).
\end{align*}
 Applying Lemma \ref{Lem: influences}, we obtain
\begin{align*}
 \tfrac{1}{2}I\left[\l_{\l'\left(\left\{ 1\right\} \right)}\right]+\tfrac{1}{2}I\left[\l_{\l'\left(\left\{ 2\right\} \right)}\right]+\l'\left(\left\{ 1\right\} \right)-\l'\left(\left\{ 2\right\} \right) & = I\left[\l_{\l'\left(\left\{ 2\right\} \right),\l'\left(\left\{ 1\right\} \right)}\right] \\
 & \ge\tfrac{1}{2}I\left[\l_{\l'\left(\left\{ 1\right\} \right)+\l'\left(\left\{ 2\right\} \right)}\right]+\l'\left(\left\{ 1\right\} \right)+\l'\left(\left\{ 2\right\} \right),
\end{align*}
so rearranging,
\begin{equation} \label{eq:15} \tfrac{1}{2}I\left[\l_{\l'\left(\left\{ 1\right\} \right)}\right]+\tfrac{1}{2}I\left[\l_{\l'\left(\left\{ 2\right\} \right)}\right] \geq \tfrac{1}{2}I\left[\l_{\l'\left(\left\{ 1\right\} \right)+\l'\left(\left\{ 2\right\} \right)}\right] + 2\l'(\{2\}).\end{equation}
Putting everything together, we have
\begin{align*}
I[\l] \geq I[\l'] & \geq \tfrac{1}{4}I\left[\l_{\l\left( \left\{ 1,2\right\} \right) }\right] + \l\left(\left\{ 1,2\right\} \right)+ \tfrac{1}{4}I\left[\l_{\l'\left(\left\{ 1\right\} \right)+\l'\left(\left\{ 2\right\} \right)}\right] + \l'(\{2\})\\
&= I[\l''] + \l(\{2\}) \geq I[\l_{\mu}] + \l(\{2\}) \geq I[\l_{\mu}] + r/2,\end{align*}
as required.

Suppose now that $\l\left(\left\{ 2\right\} \right)\le\frac{r}{2}$. Since $\mu_{1}^{-}\left(\l\right)\ge r$, we have $\l\left(\varnothing\right)\ge\frac{3r}{2}$. Let $\l'$ be the fractional lexicographic family of order 2 such that
\begin{align*}
\l'\left(\varnothing\right) =0,\ \l'\left(\left\{ 2\right\} \right)=\l\left(\left\{ 2\right\} \right),\ \l'\left(\left\{ 1\right\} \right) =\l(\varnothing)+\l\left(\left\{ 1\right\} \right),\ \l'\left(\left\{ 1,2\right\} \right)=\l\left(\left\{ 1,2\right\} \right).
\end{align*}
Note that $\l(\varnothing)+\l(\{1\}) \leq \l(\{1,2\})$, provided $c \leq 2/9$. By Lemma \ref{Lem: influences}, we have
\begin{align*} I[\l] & = \tfrac{1}{4}\left(I[\l_{\l(\varnothing)}]+I[\l_{\l(\{1\})}]+I[\l_{\l(\{2\})}]+I[\l_{\l(\{1,2\})}]\right)\\
&\quad + \l(\{1,2\})-\tfrac{1}{2}\l(\{1\})-\l(\{2\}) + \tfrac{1}{2}\l(\varnothing) +\tfrac{1}{2}|\l(\varnothing)-\l(\{1\})|,
\end{align*}
and
$$I[\l'] = \tfrac{1}{4}\left(I[\l_{\l(\varnothing)+\l(\{1\})}]+I[\l_{\l(\{2\})}]+I[\l_{\l(\{1,2\})}]\right) + \l(\{1,2\}),$$
and therefore
\begin{align} I[\l]-I[\l'] & = \tfrac{1}{4}(I[\l_{\l(\varnothing)}]+I[\l_{\l(\{1\})}] - I[\l_{\l(\varnothing)+\l(\{1\})}])\\
&\quad+ \tfrac{1}{2}(\l(\varnothing) -\l(\{1\}))+\tfrac{1}{2}|\l(\varnothing) - \l(\{1\})|-\l(\{2\})\nonumber \\
& = \tfrac{1}{4}(I[\l_{\l(\varnothing)}]+I[\l_{\l(\{1\})}] - I[\l_{\l(\varnothing)+\l(\{1\})}]) +  \min\{\l(\varnothing) - \l(\{1\}),0\}-\l(\{2\})\label{eq:overall}.
\end{align}
For all $\mu^-,\mu^+ \geq 0$ such that $\mu^- + \mu^+ \leq 1$, we have
$$\tfrac{1}{2} I[\l_{\mu^-}]+ \tfrac{1}{2} I[\l_{\mu^+}] + |\mu^+ - \mu^-| = I[\l_{\mu^+,\mu^-}] \geq \tfrac{1}{2}I[\l_{\mu^+ + \mu^-}] + \mu^+ + \mu^-,$$
using Lemma \ref{Lem: influences} and the isoperimetric inequality, so
$$I[\l_{\mu^-}]+ I[\l_{\mu^+}] -I[\l_{\mu^+ + \mu^-}] \geq 4\min\{\mu^+,\mu^-\}.$$
Applying this with $\mu^- = \l(\varnothing)$ and $\mu^+ = \l(\{1\})$ gives
$$I[\l_{\l(\varnothing)}]+ I[\l_{\l(\{1\})}] -I[\l_{\l(\varnothing)+\l(\{1\})}]) \geq 4\min\{\l(\varnothing),\l(\{1\})\}.$$
Combining this with (\ref{eq:overall}) yields
\begin{align*}
I[\l]-I[\l'] & \geq \min\{\l(\varnothing),\l(\{1\})\} + \min\{\l(\varnothing) -\l(\{1\}),0\}-\l(\{2\})\\
&= \l(\varnothing)- \l(\{2\}) \geq \tfrac{3r}{2}-\tfrac{r}{2}= r.
\end{align*}
The isoperimetric inequality now implies that
\[
I\left[\l\right]\ge I\left[\l'\right]+r \ge I\left[\l_{\mu}\right]+r,
\]
completing the proof.

 \end{proof}

%

\section{\label{sec:Bootstrapping}Two `bootstrapping' lemmas}

In this
section, we prove two `bootstrapping' lemmas which say, roughly speaking, that if $\f$ is `somewhat' close to being contained in a subcube of codimension 1 or 2, then it is `very' close to being contained in that subcube. In what follows, we write $\mu=2^{-j}+r$, where $j \geq 2$ and $0 < r \leq 2^{-j}$, we let $\epsilon>0$,
and we let $\f\subset\pn$ be a family with measure $\mu\left(\f\right)=\mu$,
and with $I\left[\f\right]=I\left[\l_{\mu}\right]+\epsilon$.

Recall that our goal is to prove Proposition \ref{prop:implies Theorem main}. First, we deal with the case where $r$ is `large'. In this case, our aim is to show that $\min\{2\mu_{i}^{-}+\tfrac{1}{2}\epsilon_{i}^{+}, 2\mu_{i}^{+}+\tfrac{1}{2}\epsilon_{i}^{-}\}\le\epsilon$ for some $i\in\left[n\right]$. The following `bootstrapping' lemma says that this inequality holds provided only that $\mu_{i}^{-}(\f) \le r$.
\begin{lem}
\label{lem:bootstrapping r large}Let $0 \leq \mu\le\frac{1}{2}$ and write
$\mu=2^{-j}+r$, where $j \geq 2$ and $r\le2^{-j}$. Let $\f\subset\pn$ be a family
with measure $\mu\left(\f\right)=\mu$, and with $I\left[\f\right]=I\left[\l_{\mu}\right]+\epsilon$.
If $\mu_{i}^{-}: = \mu_i^-\left(\f\right)\le r$ for some $i\in\left[n\right]$,
then $2\mu_{i}^{-}+\frac{1}{2}\epsilon_{i}^{+}\le\epsilon$.\end{lem}
\begin{proof}
Using Lemma \ref{Lem: influences}, the isoperimetric inequality and Lemma \ref{lem:1-lexicographical}, we have
\begin{align*}
I\left[\f\right] & =\tfrac{1}{2}I\left[\f_{\left\{ i\right\} }^{\left\{ i\right\} }\right]+\tfrac{1}{2}I\left[\f_{\left\{ i\right\} }^{\varnothing}\right]+\Inf_{i}\left[\f\right]\\
 & \ge\tfrac{1}{2}I\left[\l_{\mu_{i}^{_{+}}}\right]+\tfrac{1}{2} \epsilon_{i}^{+}+\tfrac{1}{2}I\left[\l_{\mu_{i}^{-}}\right]+\mu_{i}^{+}-\mu_{i}^{-}\\
 & =I\left[\l_{\mu_{i}^{-},\mu_{i}^{+}}\right]+\tfrac{1}{2}\epsilon_{i}^{+}\\
 & \geq I\left[\l_{\mu}\right] + 2\mu_{i}^{-} + \tfrac{1}{2}\epsilon_{i}^{+}.
\end{align*}
Rearranging yields
$$2\mu_{i}^{-} + \tfrac{1}{2}\epsilon_{i}^{+} \leq I\left[\f\right] - I\left[\l_{\mu}\right] = \epsilon,$$
proving the lemma.
\end{proof}
We now prove a bootstrapping lemma suitable for the case where $r$ is `small'. Here, our final goal is to show that there exists a family $\g$ weakly isomorphic to $\f$ such that either $c\mu_{1}^{-}(\g) +\frac{1}{2}\epsilon_{1}^{+}(\g) \leq \epsilon$, or else $c\mu\left(\g\backslash\s_{\left\{1,2\right\} }\right)+\frac{1}{4}\epsilon_{1,2}^{++}(\g)\le\epsilon$. We show that one of these inequalities holds provided $\mu_{1}^{-}(\g) \le\mu_{2}^-(\g) \le c\mu$, if $c$ is
a sufficiently small positive constant.
\begin{lem}
\label{lem:general bootstrapping} Let $\epsilon > 0$, let $0 < \mu \leq \tfrac{1}{2}$, and write $\mu=2^{-j}+r$, where $j \geq 2$ and $0 < r\le2^{-j}$.
Let $\f\subset\pn$ be a family with measure $\mu\left(\f\right)=\mu$,
and with $I\left[\f\right]=I\left[\l_{\mu}\right]+\epsilon$. Suppose that $\mu_{1}^{-}\left(\f\right)\le\mu_{2}^{-}\left(\f\right)\le \tfrac{1}{6} \mu$.
Then either $\tfrac{2}{3}\mu_{2}^{-}(\f)+\frac{1}{2}\epsilon_{2}^{+}(\f) \le\epsilon$, or $2\mu_{1}^{-}(\f) +\frac{1}{2}\epsilon_{1}^{+}(\f) \le\epsilon$, or $\tfrac{1}{6}\mu\left(\f\backslash\s_{\left\{ 1,2\right\} }\right)+\frac{1}{4}\epsilon_{1,2}^{++}(\f) \le\epsilon$.\end{lem}
\begin{proof}
The case where $\mu_{1}^{-}\le r$
is covered by Lemma \ref{lem:bootstrapping r large}, and the case
where $\mu_{2}^{-}\ge3r$ can be covered similarly by using the second
part of Lemma \ref{lem:1-lexicographical} instead of its first part. So we may assume that $r\le\mu_{1}^{-}\le\mu_{2}^{-}\le3r$.

Let $\l$ be the fractional lexicographic family of order 2, with
\[
\l\left(\left\{ 1,2\right\} \right)=\mu_{1,2}^{++},\ \l\left(\left\{ 1\right\} \right)=\mu_{1,2}^{+-},\ \l\left(\left\{ 2\right\} \right)=\mu_{1,2}^{-+},\ \l\left(\left\{ \varnothing\right\} \right)=\mu_{1,2}^{--}.
\]
Using Lemma \ref{Lem: influences}, the isoperimetric inequality and Lemma \ref{lem:2-lex}, we obtain
\[
I\left[\f\right]\ge I\left[\l\right]+\tfrac{1}{4}\epsilon_{1,2}^{++} \geq I\left[\l_{\mu}\right]+\tfrac{1}{4}\epsilon_{1,2}^{++}+ \tfrac{1}{2}r,
\]
and therefore
$$\epsilon \geq \tfrac{1}{4}\epsilon_{1,2}^{++}+ \tfrac{1}{2}r \geq \tfrac{1}{4}\epsilon_{1,2}^{++} + \tfrac{1}{12}(\mu_1^- + \mu_2^-) \geq  \tfrac{1}{4}\epsilon_{1,2}^{++} + \tfrac{1}{6}\mu(\f \setminus \s_{\{1,2\}}),$$
proving the lemma.
\end{proof}

\section{$\f$ is essentially contained in a codimension-1 subcube ($\mu$ large)}
\label{sec:large}

In this section we essentially complete the proof of Proposition \ref{prop:implies Theorem main} in the case where $\mu\left(\f\right)=\frac{1}{4}+r$, for $r\le\frac{1}{4}$ bounded away from $0$ (i.e. $r\ge c_{1}$ for some absolute constant $c_{1}>0$). In this case, by Lemma~\ref{lem:bootstrapping r large} it will suffice to prove the following.
\begin{prop}
\label{prop:mu large r large} For each $c_{1}>0$ there exists $c_{2} = c_2\left(c_{1}\right)>0$ such that the following holds.
If $\f\subset \pn$ is a family with $\frac{1}{4}+c_{1}\le\mu:=\mu\left(\f\right)\le\frac{1}{2}$
and $I\left[\f\right]\le I\left[\l_{\mu}\right]+c_{2}$, then
there exists a family $\g$ weakly isomorphic to $\f$ such that $\mu_{1}^{-}\left(\g\right)\le c_{1}$.
\end{prop}
Equivalently, the conclusion of Proposition \ref{prop:mu large r large}
can be restated by saying that there exists a coordinate $i\in\left[n\right]$
such that $\min\left\{ \mu_{i}^{-}(\f),\mu_{i}^{+}(\f)\right\} \le c_{1}$.

Throughout this section, $\f\subset\pn$ will be a family with $\frac{1}{4}+c_{1}\le\mu:=\mu\left(\f\right)\le\frac{1}{2}$,
and with $I\left[\f\right]\le I\left[\l_{\mu}\right]+c_{2}$, where
$c_{2}$ will be sufficiently small in terms of $c_{1}$. We assume without loss of generality that $\mu_{i}^{-}\le\mu_{i}^{+}$
for each $i\in\left[n\right]$, and that
\[
\Inf_{n}[\f]\le\Inf_{n-1}[\f]\le\cdots\le\Inf_{1}[\f].
\]
 We also assume that $c_{1}=2^{-k}$ for some $k\in\mathbb{N}$.

\medskip

The proof of Proposition \ref{prop:mu large r large} consists of the following six steps, similarly to in \cite{Keller Lifshitz}.
\begin{enumerate}
\item We show that there is a `gap' between `good' families that satisfy the
proposition, and `bad' families which would furnish a counterexample to it. More precisely,
we show that if $\mu_{i}^{-}\left(\f\right)>c_{1}$ for each $i\in\left[n\right]$
and if $\f_{2}\subset\pn$ is a family with $\mu\left(\f_{2}\right)=\mu\left(\f\right)$
and with $I\left[\f_{2}\right]\le I\left[\f\right]$, which satisfies
$\mu_{i}^{-}\left(\f_{2}\right)\le c_{1}$ for some $i\in\left[n\right]$,
then $\mu\left(\f \Delta\f_{2}\right) > \frac{c_{1}}{2}$.
\item We reduce the proposition to the case where $\f$ is increasing.
\item We prove the proposition in the case where
$\f$ depends on a constant $O_{c_{2}}\left(1\right)$ number of coordinates.
\item In the other case, where $n$ is large, we show that the `$n$-stable'
family
\[
\tilde{\f}:=\s_{n,n-1}\left(\s_{n,n-2}\cdots\left(\s_{n,1}\left(\f\right)\right)\right)
\]
 satisfies $\mu(\tilde{\f})=\mu(\f)$, $I[\tilde{\f}]\le I[\f]$,
and $\mu(\tilde{\f}\Delta\f)\le\frac{c_{1}}{2}$. This
reduces us to the case where $\f$ is an increasing, $n$-stable family.
\item In the case where $\f$ is $n$-stable and $|\mathcal{I}_n(\f)| \geq 2$ (say $A\ne B\in\mathcal{I}_{n}\left(\f\right)$), we show that if both $\f_{1}:=(\f\backslash\left\{ B\right\}) \cup\left\{ A\backslash\left\{ n\right\} \right\}$
and $\f_{2}:=(\f\backslash\left\{ A\right\}) \cup \{B\backslash\left\{ n\right\}\}$
are good, then $\f$ is also good. (Note that $\left|\mathcal{I}_{n}\left(\f_{1}\right)\right|<|\mathcal{I}_{n}\left(\f\right)|$,
and that $\left|\mathcal{I}_{n}\left(\f_{2}\right)\right|<\left|\mathcal{I}_{n}\left(\f\right)\right|$.)
\item Step (5) reduces us to the case where $\left|\mathcal{I}_{n}\left(\f\right)\right|\le1$, i.e. the family $\f$ is very evenly balanced in direction $n$; we can then complete the proof by induction on $n$.
\end{enumerate}

\subsection{Gap between good families and bad families}

If $0 \leq s \leq 1$ and $\f,\g \subset \pn$, we say $\g$ is an \emph{$s$-small modification}
of $\f$ if $\mu\left(\g\right)=\mu\left(\f\right)$, $I\left[\g\right]\le I\left[\f\right]$
and $\mu\left(\f\Delta\g\right)\le s$. If $\f \subset \pn$ and $c_1,c_2>0$, we say that $\f$
is \emph{bad} (with respect to $(c_1,c_2)$) if it is a counterexample to Proposition \ref{prop:mu large r large}, and {\em good} (with respect to $(c_1,c_2)$) otherwise.
\begin{lem}
\label{lem:gap} Let $\f\subset \pn$ such that $I\left[\f\right]\le I\left[\l_{\mu}\right]+c_{2}$. Let $\g$ be a $\frac{c_1}{2}$-small modification of
$\f$, and suppose that $c_2 \leq c_1$. If $\g$ is good, then so is $\f$.\end{lem}
\begin{proof}
Suppose for a contradiction that $\f$ is bad and $\g$ is good. Then by assumption,
\[
I\left[\g\right]\le I\left[\f\right]\le I\left[\l_{\mu}\right]+c_{2}.
\]
 Since $\g$ is good, we either have $\mu_{i}^{-}\left(\g\right)\le c_{1}$
or $\mu_{i}^{+}\left(\g\right)\le c_{1}$ for some $i\in\left[n\right]$.
By Lemma \ref{lem:bootstrapping r large}, this implies that we either
have
\[
\mu_{i}^{-}\left(\g\right)\le \tfrac{1}{2} c_{2} \le \tfrac{1}{2} c_{1}
\]
 or
\[
\mu_{i}^{+}\left(\g\right)\le\tfrac{1}{2} c_{2}\le\tfrac{1}{2} c_{1},
\]
 since $c_{2}\le c_{1}$. Since $\mu\left(\g\Delta\f\right)\leq \frac{c_{1}}{2}$, we have $\mu(\f \setminus \g) \leq \frac{c_1}{4}$, and therefore $\mu_{i}^{-}\left(\f\right)\le \mu_i^-(\g) + 2\mu(\f \setminus \g) \leq \frac{c_{1}}{2}+\frac{c_1}{2} = c_1$ if $\mu_{i}^{-}\left(\g\right)\le\frac{c_{1}}{2}$,
and $\mu_{i}^{+}\left(\f\right)\le \mu_i^+(\g) + 2\mu(\f \setminus \g) \leq \frac{c_{1}}{2}+\frac{c_1}{2} \leq c_{1}$
if $\mu_{i}^{+}(\g) \le\frac{c_{1}}{2}$. Hence, $\f$ is
good, a contradiction.
\end{proof}

\subsection{Reduction to the case where $\f$ is increasing}

Here we show that one can transform $\f$ into an increasing family
by a series of $\frac{c_{1}}{2}$-small modifications. (For brevity, if $i \in [n]$ we henceforth write $\s_{\varnothing i}$ for $\s_{\varnothing \{i\}}$.) It suffices to prove the following lemma.
\begin{lem}
\label{lem:small modification}
Let $\f\subset\pn$ with $\mu(\f) = \mu$, $\mu_i^-(\f) \leq \mu_i^+(\f)$ for all $i \in [n]$, and $I\left[\f\right]\le I\left[\l_{\mu}\right]+c_{1}$. Then for each $i \in [n]$,
$\s_{\varnothing i}\left(\f\right)$ is a $\frac{c_{1}}{2}$-small
modification of $\f$.\end{lem}
\begin{proof}
By Lemma \ref{lem:mon dec inf}, we have $I[\s_{\varnothing i}\left(\f\right)] \leq I[\f]$. By the isoperimetric inequality, we have
\[
\left(\mu_{i}^{+}-\mu_{i}^{-}\right)+\tfrac{1}{2}I\left[\l_{\mu_{i}^{-}}\right]+\tfrac{1}{2}I\left[\l_{\mu_{i}^{+}}\right]=I\left[\l_{\mu_{i}^{-},\mu_{i}^{+}}\right]\ge I\left[\l_{\mu}\right].
\]
 By Lemma \ref{Lem: influences}, and by the isoperimetric inequality
applied to $\f_{\left\{ i\right\} }^{\left\{ i\right\}}$ and to $\f_{\left\{ i\right\} }^{\varnothing}$,
we have
\begin{equation}
\Inf_{i}\left[\f\right]+\tfrac{1}{2}I\left[\l_{\mu_{i}^{-}}\right]+\tfrac{1}{2}I\left[\l_{\mu_{i}^{+}}\right]\le I\left[\f\right]\le I\left[\l_{\mu}\right]+c_{1}.\label{eq:-1}
\end{equation}
 These imply that
\begin{equation}
\Inf_{i}\left[\f\right]-\left(\mu_{i}^{+}-\mu_{i}^{-}\right)\le c_{1}.\label{eq:-2}
\end{equation}
Now note that
\begin{equation}
\Inf_{i}\left[\f\right]=\mu_{i}^{+}\left(\s_{\varnothing i}\left(\f\right)\right)-\mu_{i}^{-}\left(\s_{\varnothing i}\left(\f\right)\right).\label{eq:-3}
\end{equation}
 Combining (\ref{eq:-2}) and (\ref{eq:-3}), we obtain
\[
\mu\left(\f\Delta\s_{\varnothing i}\left(\f\right)\right)=\tfrac{1}{2}\left(\mu_{i}^{+}\left(\s_{\varnothing i}\left(\f\right)\right)-\mu_{i}^{+}\left(\f\right)\right)+\tfrac{1}{2}\left(\mu_{i}^{-}\left(\f\right)-\mu_{i}^{-}\left(\s_{\varnothing i}\left(\f\right)\right)\right)\le\tfrac{c_{1}}{2},
\]
 as required.
\end{proof}

The next corollary follows immediately from Lemmas \ref{lem:gap} and \ref{lem:small modification}.

\begin{cor}
\label{cor:mon}
Let $\f \subset \pn$ with $\mu(\f) = \mu$, $\mu_i^-(\f) \leq \mu_i^+(\f)$ for all $i \in [n]$ and $I[\f] \leq I[\l_\mu]+c_2$, let $c_2 \leq c_1$, and let
$$\g:=(\s_{\varnothing n}\circ\s_{\varnothing n-1}\circ\cdots\circ\s_{\varnothing1})\left(\f\right).$$
If the increasing family $\g$ is good, then so is $\f$.
\end{cor}

From now on we assume that $\f$ is increasing. 

\subsection{Proof in the case where $n$ is small}

We now show that Proposition \ref{prop:mu large r large} holds in
the case where $n$ is small. In fact, crudely, we have the following.
\begin{lem}
\label{lem:n-small}Suppose that $n \leq n_0$ and $\f \subset \pn$ with $\mu(\f) = \mu$ and $I[\f] \leq I[\l_\mu]+c_2$. Then
$\f$ is weakly isomorphic to $\l_{\mu}$, provided $c_{2} < 2^{-(n_0-1)}$.\end{lem}
\begin{proof}
If $c_{2}< 2^{-(n-1)}$, then we must have $I\left[\f\right]=I\left[\l_{\mu}\right]$ (note
that the influence of any family depending on $n$ variables is of
the form $\frac{i}{2^{n-1}}$). The lemma now follows from the uniqueness
part of the isoperimetric inequality.
\end{proof}

\subsection{Reduction to the case where $\f$ is $n$-stable}

We say that $\f$ is \emph{n-stable }if $\s_{n,i}\left(\f\right)=\f$
for each $i \in [n-1]$, and if $A\cup\left\{ n\right\} \in\f$
whenever $A\in\f$. (As usual, we write $\s_{i,j}$ for $\s_{\{i\}\{j\}}$, for brevity.) Here, we show that if $\f\subset\pn$ is bad
and $n$ is large then there exists a $\frac{c_1}{2}$-small modification
of $\f$ that is $n$-stable. We need the following well-known lemma.
\begin{lem}
\label{lem:shifting decreases influence} Let $i \neq j \in [n]$. Then $\mu\left(\s_{i,j}\left(\f\right)\right)=\mu\left(\f\right)$ and $I\left[\s_{i,j}\left(\f\right)\right]\le I\left[\f\right]$.
\end{lem}

We remark that the operator $\s_{i,j}$ preserves monotonicity, for each $i \neq j$. We also need the following crude upper bound on the total influence of lexicographically ordered families.
\begin{claim}
\label{claim:crude}
$I\left[\l_{\mu}\right]\le2$ for each $\mu\in\left[0,1\right]$.
\end{claim}
\begin{proof}
We prove the claim by induction on $n$. If $n=1$ then in fact $I\left[\l_{\mu}\right]\le1$. We may assume that $\mu\le\frac{1}{2}$,
since $I\left[\l_{1-\mu}\right]=I\left[\l_{\mu}\right]$. Hence, the
induction hypothesis implies that
\[
I\left[\l_{\mu}\right]=I\left[\l_{0,2\mu}\right]=2\mu+\tfrac{1}{2}I\left[\l_{2\mu}\right]\le2.
\]
\end{proof}

The following lemma reduces Proposition \ref{prop:mu large r large} to the case where $n$ is stable.
\begin{lem}
\label{lem:stabilizing}Let $\f \subset \pn$ be an increasing family with $\mu(\f) = \mu$ and $I[\f] \leq I[\l_\mu]+c_2$, suppose that $\Inf_n[\f] = \min_{i \in [n]} \Inf_i[\f]$, and let
$$\tilde{\f}:=\s_{n,n-1}\left(\cdots\s_{n,2}\left(\s_{n,1}\left(\f\right)\right)\right).$$
If $c_2 < \min\{2^{-(6/c_1-1)},1,c_1\}$, and the $n$-stable family $\tilde{\f}$ is good, then so is $\f$.\end{lem}
\begin{proof}
By Lemma \ref{lem:gap}, it suffices to show that $\tilde{\f}$ is a $\frac{c_1}{2}$-small modification
of $\f$. For this, by Lemma \ref{lem:shifting decreases influence}, it suffices to prove that $\mu\left(\tilde{\f}\Delta\f\right)\le\frac{c_{1}}{2}$.
The key observation is that
\begin{equation}
\mu(\tilde{\f}\Delta\f)=2\mu(\f\backslash\tilde{\f})\le2\mu(\mathcal{I}_{n}(\f)).\label{eq:111}
\end{equation}
Indeed,
$$A \mapsto A \cup \{i_{A}\},$$
where $i_A := \min\{i:\ (A \cup \{i\}) \setminus \{n\} \notin \f\}\}$, is an injection from $\f\backslash\tilde{\f}$ to $\mathcal{I}_n(\f)$.
Now note that
\begin{equation}
\mu(\mathcal{I}_{n}(\f))=\frac{\Inf_{n}\left(\f\right)}{2}\le\frac{I\left(\f\right)}{2n}\le\frac{I\left[\l_{\mu}\right]+c_{2}}{2n}.\label{eq:112}
\end{equation}
To complete the proof of the lemma, note that (\ref{eq:111}) and (\ref{eq:112}) imply
that
\[
\mu\left(\tilde{\f}\Delta\f\right)\le\frac{I\left[\l_{\mu}\right]+c_{2}}{n} < \frac{3}{n},
\]
provided $c_2 < 1$, using Claim \ref{claim:crude}. Suppose that $\f$ is bad. By Lemma \ref{lem:n-small}, we may assume that $n\geq 6/c_{1}$, provided $c_2 < 2^{-(6/c_1-1)}$,
and therefore $\mu(\tilde{\f}\Delta\f)\le\tfrac{1}{2}c_1$. Hence, $\tilde{\f}$ is a $\frac{c_1}{2}$-small modification of $\f$, and we are done by Lemma \ref{lem:gap}, provided $c_2 \leq c_1$.
\end{proof}
From now on we assume also that $\f$ is $n$-stable.

\subsection{The case where $\left|\mathcal{I}_{n}\right|\ge2$}

Here we show that if $\left|\mathcal{I}_{n}\right|\ge2$, then there
exists a $\frac{1}{2^{n-1}}$-small modification of $\f$ with smaller
$\left|\mathcal{I}_{n}\right|$. This will allow us to reduce to the case where
$\left|\mathcal{I}_{n}\right|\le1$.
\begin{lem}
\label{lem:stable families.}Let $A,B\in\mathcal{I}_{n}\left(\f\right)$ such that $A \neq B$,
and let $\f_{1}=(\f\backslash\left\{ A\right\}) \cup \{B\backslash\left\{ n\right\} \}$,
$\f_{2}=(\f\backslash \{B\})\cup \{A\backslash\left\{ n\right\}\}$.
Then either $I\left[\f_{1}\right]<I\left[\f\right]$ or $I\left[\f_{2}\right]<I\left[\f\right]$.\end{lem}
\begin{proof}
Suppose w.l.o.g.\ that $\left|A\right|\ge\left|B\right|$; we will show
that $I\left[\f_{1}\right] < I\left[\f\right]$. Since $\f$ is $n$-stable, we have
$A\backslash\left\{ i\right\} \notin\f$ for each $i\in A$ (note
that $\left((A\backslash\left\{ i\right\} \right)\backslash\left\{ n\right\}) \cup\left\{ i\right\} =A\backslash\left\{ n\right\} \notin\f$).
This implies that
$$|\partial (\f \setminus \{A\})| - |\partial \f| \leq n-2\left|A\right|.$$
Since $(B\backslash\left\{ n\right\}) \cup\left\{ i\right\} \in\f$
for each $i\notin B\backslash\left\{ n\right\}$, we have
$$|\partial ((\f \setminus \{A\}) \cup \{B \setminus \{n\}\})| - |\partial (\f \setminus \{A\})| \leq 2\left|B\right|-n-2.$$
This implies that $|\partial\f_{1}|\le|\partial \f|+\left(n-2\left|A\right|\right)+\left(2\left|B\right|-n-2\right)\leq |\partial \f|-2$.
\end{proof}

\subsection{Proof of Proposition \ref{prop:mu large r large}}

We prove the proposition by induction on $n$. Let $c_1 = 2^{-k}$, where $k \in \mathbb{N}$. Assume that
$$c_2 < \min\{2^{-(6/c_1-1)},1,c_1\}.$$
The case $n \leq k+1$ follows
from Lemma \ref{lem:n-small}. Let $n \geq k+2$, and let $\f$ be as in the hypothesis of the proposition. Suppose for a contradiction that $\f$
is bad. By Corollary \ref{cor:mon}, we may assume that $\f$ is increasing; by Lemma \ref{lem:stabilizing}, we may assume that $\f$
is $n$-stable, and by Lemmas \ref{lem:gap}, \ref{lem:n-small},
and \ref{lem:stable families.} we may assume that $\left|\mathcal{I}_{n}\left(\f\right)\right|\le1$.
If $\left|\mathcal{I}_{n}\left(\f\right)\right|=0$, then $\f$ does not depend on the $n$th coordinate and the proposition holds by the
induction hypothesis. Suppose that $\left|\mathcal{I}_{n}\left(\f\right)\right|=1$.
Then, by Lemma \ref{Lem: influences}, we have
\begin{equation}
I\left[\f\right]=\tfrac{1}{2}I\left[\f_{\left\{ n\right\} }^{\left\{ n\right\} }\right]+\tfrac{1}{2}I\left[\f_{\left\{ n\right\} }^{\varnothing}\right]+\Inf_{n}\left[\f\right]=\tfrac{1}{2}I\left[\f_{\left\{ n\right\} }^{\left\{ n\right\} }\right]+\tfrac{1}{2}I\left[\f_{\left\{ n\right\} }^{\varnothing}\right]+\frac{1}{2^{n-1}}.\label{eq:1111}
\end{equation}
Since $\left|\f\right|$ is odd, and since in the lexicographic ordering, sets containing $n$ alternate with sets not containing $n$, we have $\l_{\mu}=(\l_{\mu_{n}^{-},\mu_{n}^{+}})_{\textrm{ass}}$, and therefore
\begin{equation}
I\left[\l_{\mu}\right]=\tfrac{1}{2}I\left[\l_{\mu_{n}^{-}\left(\f\right)}\right]+\tfrac{1}{2}I\left[\l_{\mu_{n}^{+}\left(\f\right)}\right]+\frac{1}{2^{n-1}}.\label{eq:1112}
\end{equation}

By (\ref{eq:1111}) and (\ref{eq:1112}), we either have $I[\f_{\left\{ n\right\} }^{\left\{ n\right\} }]\le I[\l_{\mu_{n}^{+}}]+c_{2}$
or $I[\f_{\left\{ n\right\} }^{\varnothing}]\le I[\l_{\mu_{n}^{-}}]+c_{2}$.
Recall that we are assuming $c_{1} = 2^{-k}$ for some $k \in \mathbb{N}$, and that $n \geq k+2$. Since $1/4+2^{-k} \leq \mu =\mu_{n}^{-}+2^{-n}$, $2^{n-1}\mu_n^- \in \mathbb{Z}$ and $n > k$, we have $\mu_{n}^{-}\geq 1/4+2^{-k}$.
Moreover, since $\mu \leq 1/2$, $\mu_n^+ = \mu + 2^{-n}$ and $2^{n-1}\mu_n^+ \in \mathbb{Z}$, we must have $\mu \leq 1/2-2^{-n}$, and therefore $\mu_n^+ \leq 1/2$. Hence,
$$\tfrac{1}{4} + c_1 \leq \mu_n^- < \mu_n^+ \leq \tfrac{1}{2}.$$
Therefore, we may apply the induction hypothesis to one of $\f_{\left\{ n\right\} }^{\varnothing}$ and $\f_{\left\{ n\right\} }^{\{n\}}$. Suppose first that $\mu_{i}^{-}(\f_{\left\{ n\right\} }^{\varnothing})\le c_{1}$.
Then, by Lemma \ref{lem:bootstrapping r large}, we have $\mu_{i}^{-}(\f_{\left\{ n\right\} }^{\varnothing})\le\frac{c_{2}}{2}$.
This implies that
$$\mu_{i}^{-}\left(\f\right)\le\mu_{i}^{-}\left(\f_{\left\{ n\right\} }^{\varnothing}\right)+2^{-n} \le \tfrac{1}{2} c_{2} +2^{-n} \le c_{1} = 2^{-k}$$
since $c_{2}\leq c_1$ and $n > k$. Hence, $\f$ is good, as desired.
The case where $I[\f_{\left\{ n\right\} }^{\left\{ n\right\} }]\le I[\l_{\mu_{n}^{+}\left(\f\right)}]+c_{2}$
is similar.


\section{$\f$ is essentially contained in a codimension-2 subcube ($\mu$ small)}
\label{sec:small}

In this section we essentially complete the proof of Proposition \ref{prop:implies Theorem main} in the case where $\mu\left(\f\right)$ is `small'. Specifically, we prove the following.
\begin{prop}
\label{prop:mu small}For each $c>0$, there exists $d = d\left(c\right)>0$ such that the following holds. Suppose $\f \subset \pn$ with $\mu:=\mu\left(\f\right)\le\frac{1}{4}+d$
and $I\left[\f\right]\le I\left[\l_{\mu}\right]+d\mu$. Then
there exists a family
$\g$ weakly isomorphic to $\f$, such that $\mu_{1}^{-}\left(\g\right)\le\mu_{2}^{-}\left(\g\right)\le c\mu$.
\end{prop}

We start by reducing to the case where $\f$ is increasing.
\begin{lem}
\label{lem:mono-suff}
If Proposition \ref{prop:mu small} holds for all increasing families
$\f$, then it holds for all families $\f.$ \end{lem}
\begin{proof}
Given $c>0$, let $d' = d'(c)>0$ such that each increasing family $\g$ with $\mu(\g) \leq 1/4+d'$, satisfying $I\left[\g\right]\le I\left[\l_{\mu(\g)}\right]+d'\mu(\g)$
also satisfies $\mu_{i}^{-}\left(\g\right)\le\mu_{j}^{-}\left(\g\right) \leq \frac{c\mu(\g)}{2}$ for some $i \neq j \in [n]$.
Let $d=\min\left\{c,d'\right\}$. Let $\f \subset \pn$ be some
family satisfying $\mu:=\mu(\f) \leq 1/4+d$ and $I\left[\f\right]\le I\left[\l_{\mu}\right]+d\mu$.
We may assume without loss of generality that $\mu_{i}^{-}(\f) \le\mu_{i}^{+}(\f)$
for each $i\in\left[n\right]$. By Lemma \ref{lem:small modification}, we have
$\mu\left(\s_{\varnothing1}\left(\f\right)\Delta\f\right)\le\frac{d\mu}{2}$, and therefore
\begin{equation}
\mu\left(\f_{\left\{ 1\right\} }^{\varnothing}\right)\le \mu\left(\left(\s_{\varnothing1}\left(\f\right)\right)_{\left\{ 1\right\} }^{\varnothing}\right) +\tfrac{1}{2} d \mu.\label{eq:7.2}
\end{equation}
 Note also that $\mu(\s_{\varnothing1}\left(\f\right)_{\left\{ i\right\} }^{\varnothing})=\mu(\f_{\left\{ i\right\} }^{\varnothing})$
for any $i\ge2$. Now let
$$\g:=\s_{\varnothing n}\left(\s_{\varnothing\left(n-1\right)}\cdots\left(\s_{\varnothing1}\left(\f\right)\right)\right);$$
clearly, $\g$ is increasing. As in (\ref{eq:7.2}), we have
\[
\mu\left(\f_{\left\{ i\right\} }^{\varnothing}\right)\le\mu\left(\g_{\left\{ i\right\} }^{\varnothing}\right)+\tfrac{1}{2} d\mu \quad \forall i\in [n].
\]

By Lemma \ref{lem:mon dec inf}, we have $I\left[\g\right]\le I\left[\f\right]\le I\left[\l_{\mu}\right]+d\mu$.
Since $d \leq d'$, there exist two coordinates $i \neq j \in [n]$ such that
$\mu_{i}^{-}\left(\g\right)\le\mu_{j}^{-}\left(\g\right) \leq \frac{c\mu}{2}$.
Hence, $\mu_{i}^{-}\left(\f\right)\le\mu_{i}^{-}\left(\g\right)+\frac{d\mu}{2} \leq c\mu$,
and similarly, $\mu_{j}^{-}\left(\f\right)\le\mu_{j}^{-}\left(\g\right)+\frac{c\mu}{2}\leq c\mu$, as required.
\end{proof}

The key lemma for the proof of Proposition \ref{prop:mu small} is the following.
\begin{lem}
\label{lem:Shifting}Let $\f\subset\pn$ be increasing, with
$\mu\left(\f\right)\le\frac{1}{2}$. Let
\[
\f_{1}=\s_{n,1}\s_{n-1,1}\circ\cdots\circ\s_{2,1}\left(\f\right),
\]
\[
\f_{2}=\s_{\left\{ n,n-1\right\} 1}\circ\cdots\circ\s_{\left\{ 3,2\right\} 1}\left(\f_{1}\right),
\]

\[
\vdots
\]

\[
\f_{n}=\s_{\left\{ n,n-1,\ldots,2\right\} 1}\left(\f_{n-1}\right).
\]
Then
\begin{itemize}
\item[(i)] $\f_{n}$ is contained in the dictatorship $\d_{1}$,
\item[(ii)] $I\left[\f_{n}\right]\le I\left[\f\right]$, and
\item[(iii)] $\mu_{i}^{-}\left(\f_{n}\right)\ge\mu_{i}^{-}\left(\f\right)$
for any $i > 1$.
\end{itemize}
\end{lem}
\begin{proof}
To prove (i), first note that $\f_n$ is increasing, using the fact that $\s_{S1}(\g)$ is increasing whenever $\g$ is increasing and $\s_{S'1}(\g)=\g$ for all $S' \subset S$ with $|S'| = |S|-1$. Suppose for a contradiction that $\f_n \nsubseteq \d_{1}$; then there exists $S \subset \{2,\ldots,n\}$ such that $S \in \f_n$, so by the monotonicity of $\f_n$, we have $\{2,3,\ldots,n\} \in \f_n$. But then, by construction of $\f_n$, we have $\d_1 \subset \f_n$, and so $\d_1 \cup \{\{2,3,\ldots,n\}\} \subset \f_n$, contradicting the fact that $\mu(\f_n) = \mu(\f) \leq 1/2$.

Statement (ii) follows by repeated application of Lemma \ref{lem:shifting dec inf}, and (iii) is clear.
\end{proof}

The idea of the proof of Proposition \ref{prop:mu small} is as follows. Let $\f \subset \pn$ be an increasing family as in the hypothesis of the proposition; assume w.l.o.g.\ that $\mu_1^-(\f) = \min_i(\mu_i^-(\f))$. Let $\f_{n}$ be the family from Lemma \ref{lem:Shifting}. By Lemma
\ref{lem:Shifting}, we have $\mu_{i}^{-}((\f_{n})_{\left\{ 1\right\} }^{\{1\}})=2\mu_{i}^{-}(\f_{n})\ge 2\mu_{i}^{-}(\f)$ for all $i >1$, and $\mu((\f_{n})_{\left\{ 1\right\} }^{\{1\}}) = 2\mu(\f)$. This allows us perform an inductive argument, doubling the measure of the family at each step, and thus reducing to the case of measure somewhat larger than $1/4$ (encapsulated in the following lemma, which enables us to do the base case of the induction).
\begin{lem}
\label{lem:base for induction}For each $c>0$, there exist $d_1=d_1\left(c\right)>0,\ d_2 = d_2(c)>0$ such that the following holds. Suppose
that $\f \subset \pn$ is increasing with $\frac{1}{4}+d_1\le\mu\left(\f\right)\le\frac{1}{2}+2d_1$,
and that
\[
I\left[\f\right]\le I\left[\l_{\mu\left(\f\right)}\right]+d_2\mu\left(\f\right).
\]
 Then $\mu_{i}^{-}\left(\f\right) \le c\mu(\f)$ for some $i\in\left[n\right]$.\end{lem}
\begin{proof}
If $\mu\left(\f\right)\le\frac{1}{2}$, the lemma follows from applying Proposition \ref{prop:mu large r large} to $\f$. In the case where $\mu\left(\f\right)\ge\frac{1}{2}$,
it follows by applying Proposition \ref{prop:mu large r large} to $\f^c$.
\end{proof}
We now prove Proposition \ref{prop:mu small}.
\begin{proof}[Proof of Proposition \ref{prop:mu small}]
By Lemma \ref{lem:mono-suff}, it suffices to prove the proposition for increasing families. Let $c>0$, and let $\f \subset \pn$ be an increasing family as in the statement of the proposition, where $d=\min\{d_1(c),d_2(c)\}$ and $d_1(c),d_2(c)$ are as in Lemma \ref{lem:base for induction}. Write $\mu:=\mu(\f)$. Write $\mu=2^{-j}\cdot\mu_{0}$,
where $\frac{1}{4}+d_1\le\mu_{0}\le\frac{1}{2}+2d_1$ and $j\in \mathbb{N}$. We prove by induction on $j$ that Proposition \ref{prop:mu small} holds (for increasing families) with the above choice of $d$. Let $j \geq 1$. Suppose w.l.o.g.\ that $\f$ satisifes
$$\mu_{1}^{-}(\f) \le\mu_{2}^{-}(\f) \le\cdots\le\mu_{n}^{-}(\f).$$
Let $\f_{n}$ be as in Lemma
\ref{lem:Shifting}. Let $\f'=\left(\f_{n}\right)_{\left\{ 1\right\} }^{\left\{ 1\right\} }$; then $\mu(\f') = 2\mu$.
If $j=1$, then $\mu(\f') = \mu_0 \in [1/4+d_1,1/2+2d_1]$, and so it follows from Lemma \ref{lem:base for induction} that
\[
\min_{i>1}\mu_{i}^{-}\left(\f'\right)\le c\mu\left(\f'\right)=2c\mu.
\]
Suppose that $\min_{i>1}\mu_{i}^{-}\left(\f'\right)=\mu_{m}^{-}\left(\f'\right)$. Then
\[
\mu_{1}^{-}\left(\f\right)\le \mu_2^-(\f) \leq \mu_{m}^{-}\left(\f\right)\le\mu_{m}^{-}\left(\f_{n}\right)=\tfrac{1}{2}\mu_{m}^{-}\left(\f'\right)\leq c\mu,
\]
completing the base case of the induction. Now let $j \geq 2$, and assume the desired statement holds when $j$ is replaced by $j-1$. Applying the induction hypothesis to $\f'$ yields
\[
\min_{i>1}\mu_{i}^{-}\left(\f'\right)\le c\mu\left(\f'\right)=2c\mu,
\]
and so by the same argument as above, we have $\mu_{1}^{-}\left(\f\right)\le \mu_2^-(\f) \leq c\mu$, completing the inductive step, and proving the proposition.
\end{proof}

\section{Wrapping up the proof of Proposition \ref{prop:implies Theorem main}}
\label{sec:culmination}

Proposition \ref{prop:implies Theorem main} follows easily by combining
Propositions \ref{prop:mu large r large} and \ref{prop:mu small}
with the corresponding bootstrapping lemmas. We recall the statement
of Proposition \ref{prop:implies Theorem main} for the convenience
of the reader.
\begin{prop*}
There exist absolute constants $c_{1},c_{2}>0$ such that the following
holds. Let $\mu\le\frac{1}{2}$, let $\epsilon\le c_{1}\mu$, and
let $\f\subset\pn$ be a family with $I\left[\f\right]=I\left[\l_{\mu}\right]+\epsilon$
and $\mu\left(\f\right)=\mu$. Then there exists a family $\g$
weakly isomorphic to $\f$ such that one of the following holds.
\begin{itemize}
\item Case (1): $c_{2}\mu_{1}^{-}\left(\g\right)+\frac{1}{2}\epsilon_{1}^{+}\left(\g\right)\le\epsilon$, or
\item Case (2): $c_{2}\mu\left(\g\backslash\s_{\left\{ 1,2\right\} }\right)+\frac{1}{4}\epsilon_{1,2}^{++}\left(\g\right)\le\epsilon$.
\end{itemize}
\end{prop*}
\begin{proof}
Let $\f\subset \pn$ be as in the hypothesis of the proposition. Let $\g$ be a family weakly isomorphic to $\f$, satisfying
\[
\mu_{1}^{-}\left(\g\right)\le\mu_{2}^{-}\left(\g\right)\le\cdots\le\mu_{n}^{-}\left(\g\right)\le\mu_{n}^{+}\left(\g\right)\le\cdots\le\mu_{1}^{+}\left(\g\right).
\]
Lemma \ref{lem:general bootstrapping} implies that either Case (1) or Case (2) holds if
$\mu_{2}^{-}\left(\g\right)\le \tfrac{1}{6}\mu$, provided $c_{2}\leq \tfrac{1}{6}$. By Proposition \ref{prop:mu small},
there exists $d >0$ such that the inequality $\mu_{2}^{-}\left(\g\right)\le \tfrac{1}{6}\mu$
holds provided $\mu\left(\g\right)\le \tfrac{1}{4}+d$, and provided
$c_{1}\le d$. Thus, by Lemma \ref{lem:general bootstrapping}, Case (1) or Case (2) holds for any family $\f$
satisfying $\mu\left(\f\right)\le\tfrac{1}{4}+d$.

We may henceforth assume that $\frac{1}{4}+d\le\mu\left(\f\right)\le\frac{1}{2}$.
By Proposition \ref{prop:mu large r large}, we have $\mu_{1}^{-}\left(\g\right)\le d$
provided $c_{1}$ is sufficiently small depending on $d$. Hence, by
Lemma \ref{lem:bootstrapping r large}, we have $c_{2}\mu_{1}^{-}(\g) +\frac{1}{2}\epsilon_{1}^{+}(\g) \le\epsilon$,
provided $c_{2}\le2$, so Case (1) holds. This completes the proof.
\end{proof}

\begin{remark}
For ease of exposition, we have not attempted to optimize the value of the absolute constant $C$ given by our proof of Theorem \ref{thm:main}. (It can be checked that our proof, as written, yields $C = 2^{6\cdot 2^{360}}$. This can easily be reduced to $C = 2^{360}$, by introducing an extra constant into the statement of Proposition \ref{prop:mu small}.) Unfortunately, it does not seem possible to modify our approach to obtain $C=2$ (see Conjecture \ref{conj:exact} below).
\end{remark}

\section{Conclusion and open problems}
\label{sec:conc}
As mentioned above, we conjecture that Theorem \ref{thm:main} holds with $C=2$.
\begin{conj}
\label{conj:exact}
If $\f\subset \pn$ and $\l \subset \p([n])$ is the initial segment of the lexicographic ordering with $|\l|=|\f|$, then there exists a family $\g \subset \p([n])$ weakly isomorphic to $\l$, such that
$$|\f \Delta \g| \leq 2(|\partial \f| - |\partial \l|).$$
\end{conj}
More generally, it would be of interest to determine more precisely the behaviour of the function
$$s(n,m,l) := \max \{ \min\{|\f \Delta \g|:\ \g \cong \l\}:\ \f \subset \pn,\ |\f| = m,\ |\partial \f| \leq |\partial \l| + l\},$$
where $\mathcal{L}$ denotes the initial segment of the lexicographic ordering of size $m$.

\subsection*{Acknowledgements}
We would like to thank the anonymous referees for their careful reading of the paper, and for their helpful suggestions.

\bibliographystyle{amsplain}

\begin{dajauthors}
\begin{authorinfo}[david]
  David Ellis\\
  School of Mathematical Sciences,\\
  Queen Mary, University of London,\\
  Mile End Road,\\
  London,\\
  E1 4NS,\\
  United Kingdom.\\
  d\imagedot{}ellis\imageat{}qmul\imagedot{}ac\imagedot{}uk
\end{authorinfo}
\begin{authorinfo}[nathan]
  Nathan Keller\\
  Department of Mathematics,
  Bar Ilan University,\\
  Ramat Gan,\\
  5290002,\\
  Israel.\\
  nathan\imagedot{}keller27\imageat{}gmail\imagedot{}com
\end{authorinfo}
\begin{authorinfo}[noam]
  Noam Lifshitz\\
  Department of Mathematics,\\
  Bar Ilan University,\\
  Ramat Gan,\\
  5290002,\\
  Israel.\\
  noamlifshitz\imageat{}gmail\imagedot{}com
\end{authorinfo}
\end{dajauthors}

\end{document}